\DeclareMathOperator{\Tr}{Tr}
\newtheorem{thm}{Theorem}[section]
 \newtheorem{cor}[thm]{Corollary}
 \newtheorem{lem}[thm]{Lemma}
 \newtheorem{prop}[thm]{Proposition}
{ \theoremstyle{definition} \newtheorem{defn}[thm]{Definition}
 \newtheorem{ex}[thm]{Example}
 \newtheorem{rmk}[thm]{Remark}}
\numberwithin{equation}{section}
\newcommand{\N}{\mathbb{N}}
\newcommand{\R}{\mathbb{R}}
\newcommand{\CC}{\mathbb{C}}
\newcommand{\cG}{{\mathcal G}}
\newcommand{\cI}{{\mathcal I}}
\newcommand{\Om}{\Omega}
\begin{document}

\newcommand{\arXivNumber}{1610.09620}

\renewcommand{\PaperNumber}{034}

\FirstPageHeading

\ShortArticleName{Results Concerning Almost Complex Structures on the Six-Sphere}

\ArticleName{Results Concerning Almost Complex Structures\\ on the Six-Sphere}

\Author{Scott O.~WILSON}

\AuthorNameForHeading{S.O.~Wilson}

\Address{Department of Mathematics, Queens College, City University of New York,\\ 65-30 Kissena Blvd., Queens, NY 11367, USA}
\Email{\href{mailto:scott.wilson@qc.cuny.edu}{scott.wilson@qc.cuny.edu}}
\URLaddress{\url{http://qcpages.qc.cuny.edu/~swilson/}}

\ArticleDates{Received November 20, 2017, in final form April 09, 2018; Published online April 17, 2018}

\vspace{-1mm}

\Abstract{For the standard metric on the six-dimensional sphere, with Levi-Civita connection $\nabla$, we show there is no almost complex structure $J$ such that $\nabla_X J$ and $\nabla_{JX} J$ commute for every $X$, nor is there any integrable $J$ such that $\nabla_{JX} J = J \nabla_X J$ for every $X$. The latter statement generalizes a previously known result on the non-existence of integrable ortho\-go\-nal almost complex structures on the six-sphere. Both statements have refined versions, expressed as intrinsic first order differential inequalities depending only on $J$ and the metric. The new techniques employed include an almost-complex analogue of the Gauss map, defined for any almost complex manifold in Euclidean space.}

\Keywords{six-sphere; almost complex; integrable}

\Classification{53C15; 32Q60; 53A07}

\vspace{-3mm}

\section{Introduction}

One knows from topology that the only spheres which admit almost complex structures are those in dimensions two and six, \cite{BS}. On the two-sphere, every almost complex structure is integrable, i.e., induced by a complex structure, and furthermore this is unique up to biholomorphism. The situation for the six-sphere $S^6$ is not well understood: while there are many almost complex structures, Hopf's original question from 1926, asking whether the six-sphere admits a complex structure, remains unsolved~\cite{H}. Thus it is of fundamental importance to better understand almost complex structures and concepts related to integrability, and in particular to determine whether $S^6$ has a complex structure.

There are results which forbid certain almost complex structures on $S^6$ from being integrable. Blanchard~\cite{Bl}, and then independently LeBrun~\cite{L}, have shown that no almost complex structure on $S^6$ which is orthogonal, with respect to the standard Euclidean metric on the sphere, is also integrable. Tang has shown in \cite{T} a more general analogous result for almost structures on $S^6$ that preserve any metric which satisfies a certain curvature-related positivity condition. Chern, in a result communicated by Bryant~\cite{Br}, has shown that no almost complex structure on $S^6$ that preserves a certain $2$-form is integrable.

In this paper, we proceed in the following way. Given an almost complex structure $J$ on a~manifold, and a connection $\nabla$ on the tangent bundle, one can ask that the covariant derivative~$\nabla_X J$ is $J$-linear in the variable~$X$, i.e., $\nabla_{J X } J = J \nabla_X J$. In fact, for any torsion free connection~$\nabla$, this condition \emph{implies} the integrability of $J$, so in this case we refer to $J$ as being strongly integrable with respect to~$\nabla$.

Inspired by LeBrun's work in~\cite{L}, we show using a variation of that argument that there are no almost complex structures on $S^6$ that are strongly integrable with respect to the Levi-Civita connection of the standard round metric. We recover the results of~\cite{Bl} and~\cite{L} as a special case since every integrable orthogonal almost complex structure on any Riemannian manifold is strongly integrable with respect to $\nabla$. On the other hand, the first order condition of $J$ being strongly integrable with respect to $\nabla$ is a priori weaker than the zeroth order condition of $J$ being orthogonal with respect to the metric.

We also provide a sharper result that prohibits further almost complex structures on $S^6$ from being integrable. This is deduced from an intrinsic first order differential inequality involving $J$ and the metric, see Corollary~\ref{cor;bound}.

Another result of this paper is to show that for any almost complex structure $J$ on $S^6$, integrable or not, there are non-trivial global conditions upon~$\nabla J$ that are dictated by topology. We define for any such $J$ a canonical map into a non-compact version of the Grassmannian, which supports a symplectic structure $\omega$ determined by the curvature of a tautological bundle with connection over this space. In fact, this map is defined for any almost complex manifold in Euclidean space, yielding a closed $2$-form associated to $J$ and the normal bundle. For the sphere we are able to calculate explicitly the pullback of this form, see Theorem~\ref{thm;Pcanon}. Several corollaries are deduced to obtain new non-trivial conditions on $\nabla J$. For example, there is no~$J$ on $S^6$ such that~$\nabla_X J$ and~$\nabla_{JX} J$ commute for all~$X$, see Corollary~\ref{cor;commute} and the sharper bounds thereafter. As a final corollary we also obtain from this viewpoint the prior result on the non-existence of orthogonal almost complex structures on~$S^6$, see Corollary~\ref{cor;L}.

The contents of this paper are as follows. In Section \ref{sec;int} we review notions of integrability, including a geometric characterization of integrability, that the Lie derivative $\mathcal{L}_X J$ of $J$ is $J$-linear in $X$. We study the aforementioned $J$-linearity of $\nabla_X J$ in the variable $X$, showing it is a strong notion of integrability, and giving both a real and an equivalent complexified formulation. Finally, we prove that on any Riemannian manifold, integrable orthogonal almost complex structures always satisfy this condition,
see Theorem \ref{thm;Jintorth}.

In Section \ref{sec;grass} we introduce models for the Grassmannian and its non-compact version, to be used in later sections. While one can make homogeneous space or vector sub-space definitions, we prefer to present them as spaces of idempotent matrices since this makes the canonical maps defined in later sections most transparent. We verify that this non-compact version of the Grassmannian also carries a symplectic structure, with explicit symplectic form tamed by many almost complex structures. On the compact Grassmannian subspace of self-adjoint idempotent matrices, we obtain a simple formulation of the K\"ahler structure in terms of the standard matrix inner product $\Tr (AB^*)$.

In Section \ref{sec;Pmap} we introduce a generalization of the Gauss map, defined for an almost complex manifold in Euclidean space, which yields a canonical map into the non-compact Grassmannian. We use the term ``canonical'' since the map is induced by the projection operator $P^- = \frac 1 2 (I + i J)$ and projection onto the complexified normal bundle. For the standard $S^6$ and any $J$, we calculate the pullback of the symplectic form on the target explicitly in terms of the metric and $\nabla J$, and the corollaries mentioned above are deduced.

In Section \ref{sec;Gmap} we define for any $J$ on $S^6$ an immersion of $S^6$ into the Grassmannian of self-adjoint idempotent matrices. This is similar to the map used in \cite{L}, though modified to include the normal bundle of the sphere in Euclidean space. We calculate $\overline{\partial}$ of this map and show that it vanishes if and only if $\nabla_X J$ is $J$-linear in the variable $X$. We conclude there are no such strongly integrable almost complex structures on $S^6$, all of which would have necessarily been integrable.
Finally, we compute the pullback of the K\"ahler form explicitly and, as a consequence, we deduce an intrinsic first order differential inequality depending only on $J$ and the metric which, if satisfied everywhere, guarantees $J$ to be non-integrable.

\section{Integrability} \label{sec;int}

An almost complex structure on a smooth manifold $M$ is a section $J$ of the endomorphism bundle $\operatorname{End}(TM)$ which squares to $-\operatorname{Id}$. Such a $J$ is said to be integrable if it is everywhere induced by the complex structure $i$ on $\CC^n$ along coordinate charts. By a theorem of Newlander--Nirenburg~\cite{NN}, $J$ is integrable if and only if the Nijenhuis tensor
\begin{gather} \label{eq:N}
N(X,Y) = J[X,JY] + J[JX,Y] + [X,Y] - [JX,JY]
\end{gather}
vanishes everywhere. A more geometric way to understand this condition is given by the following lemma.

\begin{lem}An almost complex structure $J$ is integrable if and only if the Lie derivative of $J$ is $J$-linear in the direction of any vector field $X$, i.e.,
\begin{gather*}
\mathcal{L}_{JX} J = J \mathcal{L}_{X} J
\end{gather*}
for all vector fields $X$.
\end{lem}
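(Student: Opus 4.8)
The plan is to express the Lie derivative $\mathcal{L}_X J$ in terms that connect directly to the Nijenhuis tensor, and then to show that the $J$-linearity condition $\mathcal{L}_{JX}J = J\mathcal{L}_X J$ is equivalent to the vanishing of $N$. Since integrability is already characterized (via Newlander--Nirenberg) by $N \equiv 0$, it suffices to establish this algebraic equivalence between the two first-order conditions. The key computational fact I would use is that, for any vector fields $X$ and $Y$, the Lie derivative of the endomorphism $J$ acts by
\[
(\mathcal{L}_X J)(Y) = \mathcal{L}_X(JY) - J(\mathcal{L}_X Y) = [X, JY] - J[X,Y].
\]
This is the natural starting point, since it rewrites everything in terms of Lie brackets, which is exactly the language in which $N$ is defined in \eqref{eq:N}.

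First I would compute $(\mathcal{L}_{JX}J)(Y)$ and $(J\,\mathcal{L}_X J)(Y)$ separately using the formula above. Directly,
\[
(\mathcal{L}_{JX}J)(Y) = [JX, JY] - J[JX, Y],
\]
while
\[
(J\,\mathcal{L}_X J)(Y) = J\big([X,JY] - J[X,Y]\big) = J[X,JY] + [X,Y],
\]
where the last step uses $J^2 = -\operatorname{Id}$. Subtracting, the difference
\[
(\mathcal{L}_{JX}J - J\,\mathcal{L}_X J)(Y) = [JX,JY] - J[JX,Y] - J[X,JY] - [X,Y]
\]
is precisely $-N(X,Y)$, reading off the four terms in the definition \eqref{eq:N}. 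Thus the condition $\mathcal{L}_{JX}J = J\mathcal{L}_X J$ holding for all $X$ (applied to all $Y$) is exactly the statement $N(X,Y) = 0$ for all $X,Y$, and the lemma follows from Newlander--Nirenberg.

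One subtlety I would be careful about is tensoriality: both sides of the claimed identity should be verified to be tensorial in $Y$ (and the final expression in $X$ as well), so that the pointwise vanishing of $N$ genuinely matches the endomorphism identity. The expression $(\mathcal{L}_X J)(Y) = [X,JY] - J[X,Y]$ is $C^\infty$-linear in $Y$ because the non-tensorial terms arising from scaling $Y$ by a function cancel between the two brackets, and $N$ is well known to be a genuine tensor in both arguments. The main obstacle, then, is not conceptual but purely bookkeeping: matching signs and terms across the two formulations so that the difference collapses exactly onto $-N$. Once the four bracket expressions are laid out as above, the identification with \eqref{eq:N} is immediate, and no further analysis is required.
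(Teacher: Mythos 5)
Your proof is correct and follows essentially the same route as the paper: both start from the identity $(\mathcal{L}_X J)Y = [X,JY] - J[X,Y]$ and reduce the $J$-linearity condition to the vanishing of the Nijenhuis tensor $N$, invoking Newlander--Nirenberg. The only difference is that the paper leaves the bracket bookkeeping implicit, whereas you carry it out explicitly and correctly identify the difference $(\mathcal{L}_{JX}J - J\mathcal{L}_X J)Y$ with $-N(X,Y)$.
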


\begin{proof}The condition given holds if and only if $( \mathcal{L}_{JX} J ) Y= ( J \mathcal{L}_{X} J ) Y$ for all vectors $Y$, which is equivalent to equation~\eqref{eq:N} using the identity
\begin{gather*}
( \mathcal{L}_{X} J ) Y = \mathcal{L}_{X} (JY) - J ( \mathcal{L}_{X} Y ) = [X,JY] - J[X,Y].\tag*{\qed}
\end{gather*} \renewcommand{\qed}{}
\end{proof}

A connection $\nabla$ on $TM$ induces, by the Leibniz property, a connection on $\operatorname{End}(TM)$, which we also denote by $\nabla$. In particular, $J^2=-\operatorname{Id}$ implies
\begin{gather*}
( \nabla_X J) J = -J (\nabla_X J)
\end{gather*}
for all vectors $X$, so that $\nabla J$ is $J$-anti-linear in its second argument.

This paper concerns the $J$-linearity of $\nabla J$ in its first argument, which is stronger than integrability when $\nabla$ is torsion free, cf.\ Lemma~\ref{lem;strongimpliesint}. Recall that a connection $\nabla$ on the tangent bundle of a manifold~$M$ is said to be torsion free if
 \begin{gather*}
 \nabla_X Y - \nabla_Y X = [X,Y]
 \end{gather*}
 for all vector fields $X$ and $Y$.

\begin{defn} \label{defn:strongint}
Let $\nabla$ be a torsion free connection on the tangent bundle of an almost complex manifold $(M,J)$. We say $J$ is \emph{strongly integrable with respect to $\nabla$} if $\nabla_X J$ is $J$-linear in the variable~$X$, i.e.,
\begin{gather*}
\nabla_{JX} J = J \nabla_X J
\end{gather*}
for all vectors $X$.
\end{defn}

\begin{lem} \label{lem;strongimpliesint}
If $J$ is strongly integrable with respect to some torsion free connection $\nabla$, then $J$ is integrable.
\end{lem}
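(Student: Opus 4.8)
The plan is to show that strong integrability forces the Nijenhuis tensor to vanish, then invoke the Newlander--Nirenberg criterion \eqref{eq:N}. The key observation is that for a torsion free connection the Lie derivative $\mathcal{L}_X J$ and the covariant derivative $\nabla_X J$ are closely related, so I expect to translate the hypothesis $\nabla_{JX} J = J \nabla_X J$ into the integrability criterion $\mathcal{L}_{JX} J = J \mathcal{L}_X J$ of the first lemma.

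First I would express the Nijenhuis tensor in terms of $\nabla$. Since $\nabla$ is torsion free, we may replace each Lie bracket $[X,Y]$ in \eqref{eq:N} by $\nabla_X Y - \nabla_Y X$. Substituting this into $N(X,Y)$ and using the Leibniz rule $\nabla_X(JY) = (\nabla_X J)Y + J\nabla_X Y$ to collect terms, I expect the pure first-derivative-of-vector-field contributions to cancel, leaving an expression purely in terms of $\nabla J$. The anticipated outcome is a clean formula of the shape
\begin{gather*}
N(X,Y) = (\nabla_{JX} J)Y - (\nabla_{JY} J)X - J(\nabla_X J)Y + J(\nabla_Y J)X,
\end{gather*}
or some sign variant thereof, exhibiting $N$ as built entirely from $\nabla_{JX}J$ and $J\nabla_X J$.

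With such a formula in hand, the conclusion is immediate: the strong integrability hypothesis $\nabla_{JX} J = J\nabla_X J$, applied with $X$ and then with $Y$, shows the first and third terms cancel and the second and fourth cancel, so $N(X,Y) = 0$ for all $X,Y$. By Newlander--Nirenberg, $J$ is integrable. Alternatively, I could route the argument through the first lemma of the section by checking directly that $(\mathcal{L}_X J)Y = (\nabla_X J)Y - (\nabla_{JY} J)X$ or a comparable identity, reducing $\mathcal{L}_{JX} J = J \mathcal{L}_X J$ to the hypothesis; but working directly with $N$ seems most transparent.

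The main obstacle I anticipate is purely bookkeeping: carefully verifying that the terms involving $\nabla_X Y$ and $\nabla_Y X$ (as opposed to $\nabla J$) genuinely cancel when torsion freeness is used. Each Lie bracket expands into two covariant derivatives, and the $J$-prefactors and the second slot of $\nabla J$ interact, so one must track signs attentively and use $J^2 = -\operatorname{Id}$ together with the already-noted identity $(\nabla_X J)J = -J(\nabla_X J)$. I expect no conceptual difficulty beyond this careful algebraic reduction.
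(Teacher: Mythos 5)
Your proposal is correct and takes essentially the same route as the paper: the paper's proof likewise expands each bracket in $N$ via torsion freeness and the Leibniz rule to obtain $N(X,Y) = (\nabla_{JY} J - J \nabla_Y J) X - (\nabla_{JX} J - J \nabla_X J)Y$ (equation \eqref{eq:Nnabla}), after which strong integrability annihilates both terms. Your anticipated formula is this identity up to an overall sign (which you flagged as uncertain), and that sign is immaterial to the conclusion.
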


\begin{proof}
If $\nabla$ is torsion free, then using the identity $(\nabla_X J) Y = \nabla_X (J Y) - J \nabla_X Y$, we have
\begin{gather}
N(X,Y) = (\nabla_{JY} J - J \nabla_Y J) X - (\nabla_{JX} J - J \nabla_X J)Y, \label{eq:Nnabla}
\end{gather}
and that proves the lemma.
\end{proof}

In two real dimensions every almost complex structure is integrable, so the next example shows that an integrable $J$ need not be strongly integrable with respect to every connection. At the end of the section we will conclude that any integrable $J$ is strongly integrable with respect to some torsion free connection.

\begin{ex} \label{ex;2dex}Let $M = \{ (x,y) \in \R \times \R\, | \, x \neq 0 \}$ and let
$J$ on $M$ be given by
\begin{gather*}
J(x,y) =
\begin{bmatrix}
0 & x \\
\frac {-1}{ x} & 0
\end{bmatrix}.
\end{gather*}
With the standard metric and Levi-Civita connection $\nabla$ we have $\nabla_{\frac{\partial }{\partial y}} J(x,y) = 0$
and
\begin{gather*}
\nabla_{\frac{\partial }{\partial x}} J(x,y) =
\begin{bmatrix}
0 & 1 \\
\frac{1}{ x^2} & 0
\end{bmatrix}.
\end{gather*}
So, $J$ is not strongly integrable with respect to $\nabla$ since
\begin{gather*}
 \nabla_{ J \frac{\partial }{\partial y}} J(x,y)= \nabla_{x \frac{\partial }{\partial x}} J(x,y) =
\begin{bmatrix}
0 & x \\
\frac{1}{ x} & 0
\end{bmatrix}, \\
J \nabla_{\frac{\partial }{\partial y}} J(x,y)= 0 .
\end{gather*}
\end{ex}

The proof of Lemma \ref{lem;strongimpliesint} suggests considering, for any torsion free connection $\nabla$, the deviation from $\nabla_X J$ being $J$-linear in $X$, i.e.,
\begin{gather} \label{eq:tanalg}
m(X,Y) := ( \nabla_{JX} J - J \nabla_X J ) Y.
\end{gather}
It is straightforward to check that $m(X,Y)$ is a tensor (i.e., linear over functions in each variable), and that
\begin{gather} \label{eq:misJantilinear}
m(JX,Y) = m(X,JY) = - J m(X,Y),
\end{gather}
so that $m$ is $J$ anti-linear. One can regard $m$ as a smoothly varying family of bi-linear operations on the tangent spaces, so we will refer to this as the \emph{tangent algebra} associated to $J$ and $\nabla$.
We can recast the previous lemmas as

\begin{cor} \label{cor:Jintcomm}
 An almost complex structure is integrable if and only if, for any torsion free connection, the tangent algebra associated to $J$ and the connection is \emph{everywhere commutative}. An almost complex structure is strongly integrable with respect to a torsion free connection if and only if the associated tangent algebra is \emph{everywhere vanishing}.
 \end{cor}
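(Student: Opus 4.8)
The plan is to recast both assertions directly from the identity \eqref{eq:Nnabla} proved in Lemma~\ref{lem;strongimpliesint}, after rewriting it in the notation of the tangent algebra \eqref{eq:tanalg}. First I would observe that, with $m(X,Y) := (\nabla_{JX} J - J\nabla_X J)Y$, equation \eqref{eq:Nnabla} becomes
\[ N(X,Y) = m(Y,X) - m(X,Y), \]
so that the Nijenhuis tensor is exactly the antisymmetric combination of $m$ in its two arguments. Since $N$ is defined in \eqref{eq:N} purely from $J$ and the Lie bracket, this identity holds simultaneously for every torsion free connection, even though the bilinear operation $m$ itself depends on the choice of $\nabla$.

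For the first biconditional I would argue that, at any point, commutativity of the tangent algebra means $m(X,Y) = m(Y,X)$ for all tangent vectors $X,Y$, i.e., $m(Y,X) - m(X,Y) = 0$, which by the displayed identity is precisely $N(X,Y) = 0$. Hence, for any fixed torsion free connection, everywhere-commutativity of the associated tangent algebra is equivalent to the vanishing of $N$ everywhere, which by Newlander--Nirenberg~\cite{NN} is equivalent to the integrability of $J$. In particular integrability forces commutativity for every such connection, while commutativity for even a single one already forces $N \equiv 0$ and hence integrability, so the phrase ``for any torsion free connection'' is unambiguous.

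For the second biconditional I would note that, by Definition~\ref{defn:strongint}, $J$ is strongly integrable with respect to $\nabla$ exactly when $\nabla_{JX} J = J\nabla_X J$ for all $X$; since an endomorphism is zero precisely when it annihilates every vector, this is equivalent to $(\nabla_{JX} J - J\nabla_X J)Y = 0$ for all $X,Y$, that is, to $m \equiv 0$. Thus strong integrability is literally the everywhere-vanishing of the tangent algebra. Because both halves reduce to translating definitions against the already-proved identity \eqref{eq:Nnabla}, I do not expect any genuine obstacle; the only points needing care are confirming that ``commutative'' for the bilinear $m$ means symmetry in its arguments, matching the antisymmetric combination in $N$, and using the connection-independence of $N$ to justify the quantifier in the first statement.
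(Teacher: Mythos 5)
Your proposal is correct and takes essentially the same route as the paper: the paper's proof simply cites equation~\eqref{eq:Nnabla} (which, in tangent-algebra notation, is exactly your identity $N(X,Y)=m(Y,X)-m(X,Y)$) for the first claim and Definition~\ref{defn:strongint} for the second. Your write-up merely makes explicit what the paper leaves implicit, including the useful observation that $N$ is connection-independent, which justifies the ``for any torsion free connection'' quantifier.
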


 \begin{proof}Using the definition in equation~\eqref{eq:tanalg}, the first statement follows from equation~\eqref{eq:Nnabla} and the second
statement follows from Definition (\ref{defn:strongint}).
 \end{proof}

\begin{rmk}A tangent algebra on any surface defines (at any point) an operation which is either zero or a division algebra. In fact, one can see from equation~\eqref{eq:misJantilinear} that the tangent algebra is uniquely determined at any point by its value $m(X,X)$ for some single vector $X$. In higher dimensions this is far from the case, as we have direct sums of (trivial and non-trivial) rank two examples, such as on the vector space $\CC$ with product $(a,b) \mapsto \epsilon \overline{ab}$, for any $\epsilon \in \R$. It is perhaps worth noting that there are no division algebras in dimension six.
\end{rmk}

Recall that given $(M,J)$ we define the holomorphic and anti-holomorphic tangent spaces to be the $+i$ and $-i$ eigenspaces of $J$ on $TM \otimes \CC$, i.e.,
\begin{gather*}
T^{1,0}_x = \{ v \in T_xM \otimes \CC \,| \, Jv = i v\}, \\
T^{0,1}_x = \{ v \in T_xM \otimes \CC\, | \, Jv = - i v\}.
\end{gather*}
The following lemma shows that strong integrability is equivalent to the connection respecting the $T^{0,1}$ summand.

\begin{lem} \label{lem:strongintequiv}
Let $(M,J)$ be an almost complex manifold and $\nabla$ any torsion free connection. The following are equivalent:
\begin{itemize}\itemsep=0pt
\item $J$ is strongly integrable with respect to $\nabla$, i.e., the tangent algebra vanishes.
\item The $\CC$-linear extension of $\nabla$ satisfies $\nabla\colon T^{0,1} \otimes T^{0,1} \to T^{0,1}$.
\end{itemize}
\end{lem}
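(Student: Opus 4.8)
The plan is to reduce both conditions to the single statement that $(\nabla_Z J)W = 0$ for all local sections $Z,W$ of $T^{0,1}$, working throughout with the $\CC$-linear extensions of $J$, of $\nabla$, and of the tangent algebra $m$ from \eqref{eq:tanalg}. The bridge between the subbundle $T^{0,1}$ and the covariant derivative of $J$ is the Leibniz identity $(\nabla_Z J)W = \nabla_Z(JW) - J\nabla_Z W$. If $W$ is a section of $T^{0,1}$, then $JW = -iW$, so this reads $(\nabla_Z J)W = -(J + iI)\nabla_Z W$. Since $J + iI$ annihilates exactly $T^{0,1}$ (on $T^{1,0}$ it acts as $2iI$), this gives $\nabla_Z W \in T^{0,1}$ if and only if $(\nabla_Z J)W = 0$. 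This already identifies the second bullet with the condition $(\nabla_Z J)W = 0$ for all sections $Z,W$ of $T^{0,1}$.

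Next I would connect this to the vanishing of the tangent algebra by a type count. Substituting eigenvectors of $J$ into the anti-linearity relation \eqref{eq:misJantilinear}, one finds that $m(Z,\cdot)$ takes values in $T^{1,0}$ when $Z \in T^{0,1}$ and in $T^{0,1}$ when $Z \in T^{1,0}$, with the same dichotomy in the second slot. Hence the two mixed components $m(T^{0,1},T^{1,0})$ and $m(T^{1,0},T^{0,1})$ land in $T^{1,0}\cap T^{0,1} = 0$ and vanish automatically, while the two pure components are complex conjugates of one another because $m$ is a real tensor (so $\overline{m(Z,W)} = m(\overline Z,\overline W)$ and conjugation interchanges $T^{1,0}$ and $T^{0,1}$). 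Therefore $m$ vanishes identically if and only if $m(Z,W) = 0$ for all $Z,W \in T^{0,1}$.

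It then remains to compute this pure component. For sections $Z,W$ of $T^{0,1}$ I would use $JZ = -iZ$ together with the $\CC$-linearity of $\nabla$ in the direction variable to write $m(Z,W) = -i(\nabla_Z J)W - J(\nabla_Z J)W$. Because $\nabla_Z J$ anti-commutes with $J$ (the $J$-anti-linearity of $\nabla J$ in its second argument noted before Definition~\ref{defn:strongint}), it carries $T^{0,1}$ into $T^{1,0}$, so $(\nabla_Z J)W \in T^{1,0}$ and $J(\nabla_Z J)W = i(\nabla_Z J)W$; thus $m(Z,W) = -2i(\nabla_Z J)W$. Chaining the three steps yields: the tangent algebra vanishes $\iff$ $m(Z,W)=0$ on $T^{0,1}$ $\iff$ $(\nabla_Z J)W=0$ on $T^{0,1}$ $\iff$ $\nabla$ preserves $T^{0,1}$, which is the claimed equivalence.

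I expect the only genuine subtlety to be the type-count step, namely justifying that vanishing of the single pure component $m|_{T^{0,1}\times T^{0,1}}$ already forces $m \equiv 0$; everything else is a direct application of the Leibniz rule and the eigenvalue conditions $Jv = \pm i v$. One should also note that tensoriality of $m$ and of $\nabla J$ permits all computations on local eigen-sections of $J$ rather than on arbitrary vector fields, and that torsion-freeness, although assumed in the statement, plays no role in this particular equivalence.
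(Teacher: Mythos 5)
Your proof is correct, and it takes a somewhat different route from the paper's. The paper parametrizes $T^{0,1}$ by real vector fields, writing $V = X+iJX$, $W = Y+iJY$, expanding $\nabla_V W$ into real and imaginary parts, and reading off membership in $T^{0,1}$ as the single identity $Jm(X,Y)= (\nabla_X J + J\nabla_{JX}J)Y = 0$; since $X$ and $Y$ there are \emph{arbitrary real} vectors, the equivalence with $m\equiv 0$ is immediate and no further argument is needed. You instead work directly with eigen-sections of $J$, via the bridge $(\nabla_Z J)W = -(J+iI)\nabla_Z W$ for $W$ a section of $T^{0,1}$ and the identity $m(Z,W) = -2i(\nabla_Z J)W$ on $T^{0,1}\times T^{0,1}$; the price is that your computation only controls the pure $(0,1)$-component of $m$, so you need the extra type-count step --- mixed components vanish automatically by \eqref{eq:misJantilinear}, and the two pure components are complex conjugates since $m$ is real --- to bootstrap from $m|_{T^{0,1}\times T^{0,1}}=0$ to $m\equiv 0$. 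That step is correct (and is exactly where you predicted the subtlety would lie); it also buys a structural fact the paper leaves implicit, namely that the whole tangent algebra is encoded by its restriction to $T^{0,1}\times T^{0,1}$, whereas the paper's real-variable expansion avoids the type count at the cost of slightly heavier bookkeeping. Your closing remarks are also accurate: tensoriality of $m$ and $\nabla J$ justifies computing on local eigen-sections, and torsion-freeness plays no role in this equivalence (the paper's proof never uses it either; it enters only through Definition~\ref{defn:strongint} and Lemma~\ref{lem;strongimpliesint}).
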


The second condition should be compared with the integrability condition that $T^{0,1}$ is closed under the Lie bracket, thus giving another verification of Lemma~\ref{lem;strongimpliesint}.

\begin{proof}Any $V,W \in T^{0,1}$ can be written uniquely as $V = X + iJX$ and $W = Y + iJY$ for some real $X$ and $Y$. Then
\begin{gather*}
\nabla_{X+ iJX} (Y + i JY) = (\nabla_X Y - \nabla_{JX} (JY) ) + i (\nabla_X (JY) + \nabla_{JX} Y ).
\end{gather*}
The right hand side is in $ T^{0,1}$ if and only if
\begin{gather*}
J (\nabla_X Y - \nabla_{JX} (JY) ) = (\nabla_X (JY) + \nabla_{JX} Y ).
\end{gather*}
Using the identity $\nabla_{JX} Y + J\nabla_{JX} (JY) = J ( \nabla_{JX} J ) Y$, this is equivalent to
\begin{gather*}
Jm(X,Y)= ( \nabla_X J +J ( \nabla_{JX} J ) ) Y = 0
\end{gather*}
for all $X$ and $Y$, which holds for all $X$ and $Y$ if and only if $J$ is strongly integrable with respect to $\nabla$.
\end{proof}

In fact, for orthogonal almost complex structures, the two conditions of the previous lemma are equivalent to integrability (see \cite[p.~267]{S}). For completeness here, we give our own proof in one direction, inspired by an argument in \cite[Appendix~C.7]{McS}. There a stronger hypothesis (namely, that the form $\omega$ below is also closed) is used to conclude the stronger consequence (that $\nabla J \equiv 0$).

\begin{thm} \label{thm;Jintorth} Let $M$ be a smooth manifold with Riemannian metric $\langle - ,- \rangle$, Levi-Civita connection $\nabla$, and integrable almost complex structure $J$. If $J$ is compatible with the metric, i.e., $\langle JX,JY \rangle = \langle X,Y \rangle $ for all $X,Y$, then $J$ is strongly integrable with respect to $\nabla$.
\end{thm}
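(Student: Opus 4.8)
The plan is to reduce the statement to the $T^{0,1}$ criterion of Lemma~\ref{lem:strongintequiv}: it suffices to show that the $\CC$-linear extension of $\nabla$ maps $T^{0,1}\otimes T^{0,1}$ into $T^{0,1}$. I would work entirely on the complexification $TM\otimes\CC$, extending the metric $\langle -,-\rangle$ to a $\CC$-bilinear symmetric form and $\nabla$ $\CC$-linearly; the extended $\nabla$ remains torsion free and compatible with the extended metric, so it is still characterized by the Koszul formula.

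First I would record two structural facts. Compatibility of $J$ with the metric forces $T^{0,1}$ to be isotropic: for $V,W\in T^{0,1}$ one has $\langle V,W\rangle = \langle JV,JW\rangle = \langle -iV,-iW\rangle = -\langle V,W\rangle$, hence $\langle V,W\rangle = 0$. Since $\dim_\CC T^{0,1} = \tfrac12\dim_\CC(TM\otimes\CC)$ and the extended metric is nondegenerate, $T^{0,1}$ is maximal isotropic, so that $(T^{0,1})^\perp = T^{0,1}$. Second, integrability means exactly that $T^{0,1}$ is closed under the Lie bracket (the complexified form of the vanishing of the Nijenhuis tensor~\eqref{eq:N}), which is the classical fact noted just after Lemma~\ref{lem:strongintequiv}.

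The heart of the argument is then the Koszul formula for the Levi-Civita connection, which for complexified vector fields $U,V,W$ reads
\begin{gather*}
2\langle \nabla_V W, U\rangle = V\langle W, U\rangle + W\langle V, U\rangle - U\langle V, W\rangle \\
+ \langle [V,W], U\rangle - \langle [V,U], W\rangle - \langle [W,U], V\rangle.
\end{gather*}
Taking $U,V,W\in T^{0,1}$, the three derivative terms vanish because $\langle W,U\rangle$, $\langle V,U\rangle$ and $\langle V,W\rangle$ are identically zero by isotropy, while the three bracket terms vanish because each bracket lies in $T^{0,1}$ (integrability) and is then paired with an element of $T^{0,1}$ (isotropy again). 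Hence $\langle \nabla_V W, U\rangle = 0$ for every $U\in T^{0,1}$, so $\nabla_V W\in(T^{0,1})^\perp = T^{0,1}$. By Lemma~\ref{lem:strongintequiv} this is precisely strong integrability, i.e., the tangent algebra~\eqref{eq:tanalg} vanishes.

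I expect the step requiring the most care to be the passage to the complexification: checking that the $\CC$-linear extension of $\nabla$ is genuinely the Levi-Civita connection of the $\CC$-bilinear extension of the metric, so that the Koszul formula applies verbatim, and confirming the maximal-isotropy identity $(T^{0,1})^\perp = T^{0,1}$, which is exactly what upgrades ``orthogonal to all of $T^{0,1}$'' to ``lies in $T^{0,1}$.'' An alternative that avoids complexification would follow the McDuff--Salamon computation referenced above, expressing the tangent algebra through the fundamental two-form $\omega(X,Y)=\langle JX,Y\rangle$ and the Nijenhuis tensor via a standard identity for $\langle(\nabla_X J)Y,Z\rangle$, and observing that setting $N\equiv 0$ (without assuming $d\omega\equiv 0$) already forces the tangent algebra to vanish; but the complexified argument above is shorter and more transparent.
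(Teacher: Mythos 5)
Your proof is correct, but it takes a genuinely different route from the paper's. You reduce the theorem to Lemma~\ref{lem:strongintequiv} (strong integrability is equivalent to $\nabla$ preserving $T^{0,1}$) and verify that criterion by complexifying: the $\CC$-bilinear extension of the metric makes $T^{0,1}$ maximal isotropic, integrability makes it involutive, and then every term of the Koszul formula for $2\langle \nabla_V W, U\rangle$ with $U,V,W \in T^{0,1}$ vanishes, giving $\nabla_V W \in \big(T^{0,1}\big)^\perp = T^{0,1}$. The points you flag as delicate are indeed fine: torsion-freeness and metric compatibility are $\CC$-multilinear identities that hold on real fields and hence extend, so the Koszul formula applies verbatim, and nondegeneracy of the complexified metric plus a dimension count gives $\big(T^{0,1}\big)^\perp = T^{0,1}$. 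The paper instead argues entirely over $\R$: it introduces the fundamental $2$-form $\omega(X,Y) = \langle JX, Y\rangle$, chooses extensions of vectors whose covariant derivatives vanish at a point, expresses $d\omega$ and $\langle X, N(Y,Z)\rangle$ in terms of $\nabla J$, and, after setting $N \equiv 0$, concludes because $2\langle (\nabla_{JX}J - J\nabla_X J)Y, Z\rangle$ equals an expression alternating in $X$ and $Y$ while, by Corollary~\ref{cor:Jintcomm}, the left side is symmetric in $X$ and $Y$ --- essentially the computation inspired by \cite{McS} that you mention as your alternative. Your approach is shorter and more conceptual, close to the classical Hermitian-geometry argument in \cite{S} and to the observation recorded in Remark~\ref{rmk;orthversusstrong}; the paper's real computation stays within its own tensorial formalism, avoids any discussion of complexified connections, and yields as a byproduct the identity $d\omega(JX,JY,JZ) = d\omega(JX,Y,Z) + d\omega(X,JY,Z) + d\omega(X,Y,JZ)$, which your route does not produce.
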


\begin{proof}Since $J$ is orthogonal, there is an induced $2$-form $\omega$ defined by
\begin{gather*}
\omega(X,Y) = \langle JX, Y \rangle.
\end{gather*}
For any given vectors $X$, $Y$ and $Z$ at a fixed point $p \in M$, choose extensions of these vectors to vector fields so that
 all six covariant derivatives of the form $\nabla_V W$ vanish at the point $p$. Then, at the point~$p$, all Lie brackets also vanish, and using the formula for $d$ in terms of the Lie bracket, and the fact that $\nabla$ respects the metric, we have
\begin{gather*}
d \omega (X,Y,Z) = \langle ( \nabla_X J ) Y, Z\rangle + \langle ( \nabla_Y J ) Z, X \rangle + \langle ( \nabla_Z J ) X, Y\rangle .
\end{gather*}
Using equation~\eqref{eq:Nnabla} and the previous equation we calculate
\begin{gather*}
\langle X, N(Y,Z) \rangle =
\langle X, ( \nabla_{JZ} J) Y\rangle - \langle X, J ( \nabla_Z J ) Y\rangle
 - \langle X, ( \nabla_{JY} J) Z\rangle
 + \langle X, J( \nabla_Y J) Z\rangle \\
\hphantom{\langle X, N(Y,Z) \rangle}{} = - \langle X, J ( \nabla_Z J ) Y\rangle + \langle X, J ( \nabla_Y J ) Z\rangle
 - \langle JZ, ( \nabla_{Y} J ) X\rangle + \langle Z, ( \nabla_{X} J ) (JY)\rangle \\
\hphantom{\langle X, N(Y,Z) \rangle=}{}
 - \langle Y, ( \nabla_{X} J ) (JZ)\rangle + \langle JY, ( \nabla_{Z} J ) X\rangle
 + d \omega (X,JZ,Y) - d \omega (X,JY,Z)\\
\hphantom{\langle X, N(Y,Z) \rangle}{}
= - 2 \langle J ( \nabla_X J ) Y, Z \rangle + d \omega (X,JZ,Y) - d \omega (X,JY,Z),
\end{gather*}
where in the last equality we have used the fact that $J$ and $\nabla J$ are both skew-adjoint. So,
\begin{gather*}
2 \langle J ( \nabla_X J ) Y, Z \rangle = -\langle X, N(Y,Z) \rangle + d \omega (X,JZ,Y) - d \omega (X,JY,Z), \\
2 \langle ( \nabla_{JX} J ) Y, Z \rangle = 2 \langle J ( \nabla_{JX} J ) Y, JZ \rangle \\
\hphantom{2 \langle ( \nabla_{JX} J ) Y, Z \rangle}{}
= -\langle JX, N(Y,JZ) \rangle + d \omega (JX,J^2Z,Y) - d \omega (JX,JY,JZ) \\
\hphantom{2 \langle ( \nabla_{JX} J ) Y, Z \rangle}{} = \langle X, N(Y,Z) \rangle - d \omega (JX,Z,Y) - d \omega (JX,JY,JZ).
\end{gather*}
Subtracting these equations we have
\begin{gather*}
2\langle ( J ( \nabla_X J ) -( \nabla_{JX} J ) ) Y , Z \rangle= -2 \langle X, N(Y,Z) \rangle\\
\qquad{} + d \omega (X,JZ,Y) - d \omega (X,JY,Z) + d \omega (JX,Z,Y) + d \omega (JX,JY,JZ).
\end{gather*}
Using $N \equiv 0$ we have
\begin{gather*}
2\langle ( ( \nabla_{JX} J ) - J ( \nabla_X J ) ) Y , Z \rangle\\
\qquad{} = d \omega (X,Y, JZ) + d \omega (X,JY,Z) + d \omega (JX,Y,Z) - d \omega (JX,JY,JZ).
\end{gather*}
The right hand side of this equation is alternating in $X$ and~$Y$, but by Corollary~\ref{cor:Jintcomm}, the left hand side is symmetric in~$X$ and~$Y$, so both sides vanish for all $Z$, and therefore $ ( ( \nabla_{JX} J ) - J ( \nabla_X J ) )$ $\equiv 0$.
\end{proof}

The argument also shows that, under the hypotheses of the theorem,
\begin{gather*}
d \omega (JX,JY,JZ) = d \omega (JX,Y,Z) + d \omega (X,JY,Z) + d \omega (X,Y, JZ).
\end{gather*}

\begin{rmk} \label{rmk;orthversusstrong}
According to LeBrun \cite{L}, if $J$ on $S^{2n}$ is integrable and orthogonal with respect to the standard Euclidean metric, then for $V,W \in T^{0,1}$ and the Levi-Civita connection $\nabla$, we have
\begin{gather*}
\nabla_V W = \frac 1 2 [V,W] \in T^{0,1},
\end{gather*}
where the containment follows from integrability. The previous theorem implies in this case that~$J$ is strongly integrable with respect to~$\nabla$, (as also does Lemma~\ref{lem:strongintequiv}). Note that $\nabla_V W \in T^{0,1}$ is a priori weaker than the condition $\nabla_V W = \frac 1 2 [V,W] \in T^{0,1}$.
 \end{rmk}

\begin{cor}If $J$ is integrable then there exists a torsion free metric preserving connection~$\nabla$ such that $J$ is strongly integrable with respect to~$\nabla$.
\end{cor}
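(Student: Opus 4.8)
The plan is to reduce the statement to Theorem~\ref{thm;Jintorth} by producing a Riemannian metric compatible with $J$; its Levi-Civita connection will then be the desired torsion free, metric preserving connection. The whole point is that Theorem~\ref{thm;Jintorth} already does the analytic work once we are in the orthogonal setting, so all that remains is to manufacture a metric for which $J$ is orthogonal.

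First I would construct a compatible metric. Starting from any Riemannian metric $h$ on $M$ (which exists by a partition of unity argument), define
\begin{gather*}
\langle X, Y \rangle := h(X,Y) + h(JX, JY).
\end{gather*}
I would then check that this is again a Riemannian metric, being symmetric and positive definite as a sum of two such forms, and that it is compatible with $J$: using $J^2 = -\operatorname{Id}$,
\begin{gather*}
\langle JX, JY \rangle = h(JX, JY) + h(J^2 X, J^2 Y) = h(JX, JY) + h(X,Y) = \langle X, Y \rangle.
\end{gather*}

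Next I would take $\nabla$ to be the Levi-Civita connection of $\langle -,- \rangle$. By its defining properties $\nabla$ is torsion free and preserves $\langle -,- \rangle$, so it is a torsion free metric preserving connection. Finally, since $J$ is integrable by hypothesis and compatible with $\langle -,- \rangle$ by construction, Theorem~\ref{thm;Jintorth} applies directly and yields that $J$ is strongly integrable with respect to $\nabla$, completing the argument.

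I do not expect any genuine obstacle here: the substantive input is Theorem~\ref{thm;Jintorth}, and the only additional ingredient, the existence of a $J$-compatible metric, is the standard averaging construction above. One point worth flagging, rather than an obstacle, is that the resulting $\nabla$ depends on the auxiliary choice of $h$, so the corollary asserts the \emph{existence} of such a connection rather than its canonicity; no uniqueness is claimed or needed.
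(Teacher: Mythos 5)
Your proposal is correct and is exactly the paper's argument: average an arbitrary metric over $J$ to obtain a compatible one, take its Levi-Civita connection, and invoke Theorem~\ref{thm;Jintorth}. The paper states this in one sentence; you have merely written out the standard averaging construction explicitly.
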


To contrast with the K\"ahler case, of course there need not exist a torsion free metric preserving connection $\nabla$ such that $\nabla J = 0$.

\begin{proof}Choose any metric compatible with $J$ (for example, one may average any given metric with respect to $J$) and then let $\nabla$ be the Levi-Civita connection associated to this metric.
\end{proof}

\section{Grassmannian of idempotent matrices} \label{sec;grass}

We begin with the definitions of the Grassmannian and its non-compact version, both presented as a space of idempotent matrices (see, e.g., \cite[Problem~5-C]{MS}).

\begin{defn}For $n \in \N$ and $0 \leq k \leq n$ let
\begin{gather*}
\cI_{k,n} = \big\{ P \in \mathcal{M}_{n \times n}(\CC) \,| \, P^2 = P, \, \operatorname{rank}(P) = k \big\}
\end{gather*}
be the space of idempotent $n$ by $n$ complex matrices of rank $k$, and let $ \cG_{k,n} \subset \cI_{k,n}$ be the subspace of self-adjoint matrices, given by
\begin{gather*}
\cG_{k,n} = \big\{ P \in \mathcal{M}_{n \times n}(\CC)\, | \, P^2 = P ,\, \operatorname{rank}(P) = k, \, P^* = P \big\},
\end{gather*}
where the adjoint is taken with respect to the standard complex inner product on $\CC^n$.
\end{defn}

Note that $P \in \cG_{k,n}$ if and only if $P$ is an orthogonal projection of rank~$k$. So, the latter space $\cG_{k,n}$ can be identified with the Grassmannian of $k$-planes in $\CC^n$, since there is a~unique orthogonal projection whose image is a given $k$-dimensional subspace. The space $\cI_{k,n}$ can be described as the space of two complementary subspaces of~$\CC^n$, of dimensions~$k$ and $n-k$.

The relation $P^2 = P$ implies, for any path $P_t \in \cI_{k,n}$, that
\begin{gather*}
P_t' P_t + P_t P'_t = P'_t,
\end{gather*}
so that $(I-P_t)P_t' = P_t'P_t$ and $P_t P_t' = P_t' (I-P_t)$. For short we may write this as $dPP= (I-P)dP$ and $P dP= dP (I-P)$. The tangent spaces are
\begin{gather*}
T_P\cI_{k,n} = \big\{ X \in \mathcal{M}_{n \times n}(\CC)\, |\, XP= (I-P)X \, \textrm{and} \, PX= X(I-P) \big\}
\end{gather*}
and
\begin{gather*}
T_P \cG_{k,n} = \big\{ X \in \mathcal{M}_{n \times n}(\CC) \,| \, XP = P^\perp X, \, PX= X P^\perp , \, \textrm{and} \, X^*=X \big\},
\end{gather*}
of dimensions $2k(n-k)$, and $k(n-k)$, respectively.

Notice that $P =\left[\begin{smallmatrix}
1 & 0 \\
0 & 0
\end{smallmatrix}\right]$ in any basis respecting the splitting $\CC^n = \operatorname{Im}(P) \oplus \operatorname{Ker}(P)$, so that $X \in T_P \cI_{k,n}$ if and only if
$X =\left[\begin{smallmatrix}
0 & B \\
C & 0
\end{smallmatrix}\right]$ for some linear operators $B\colon \operatorname{Ker}(P) \to \operatorname{Im}(P)$ and $C \colon \operatorname{Im}(P) \to \operatorname{Ker}(P)$. If $X \in T_P \cG_{k,n}$ then $\operatorname{Im}(P)$ and $\operatorname{Ker}(P)$ are orthogonal and we have $C = B^*$.

The subspaces $\operatorname{Im}(P)$ and $\operatorname{Ker}(P)$ form the fibers of two vector bundles $\operatorname{Im} \to \cI_{k,n}$ and
$\operatorname{Ker} \to \cI_{k,n}$, of ranks $k$ and $n-k$, respectively. By the above, the tangent bundle of $\cI_{k,n}$ is simply the bundle $\operatorname{Hom}( \operatorname{Ker}, \operatorname{Im}) \oplus \operatorname{Hom}( \operatorname{Im}, \operatorname{Ker})$, and the tangent bundle of $\cG_{k,n}$ is isomorphic to $\operatorname{Hom}(\operatorname{Im},\operatorname{Ker})$, as expected. Note all of these bundles are contained inside a trivial complex bundle.

In particular, the bundle $\operatorname{Im} \to \cI_{k,n}$ is a sub-bundle of the trivial $\CC^n$ bundle, so it inherits a~connection given by
the trivial connection~$d$ on the trivial~$\CC^n$ bundle, followed (at each point~$P$) by the projection $P$ onto $\operatorname{Im}(P) \subset \CC^n$. In short, $\nabla^{\operatorname{Im}} = P \circ d$. The curvature of this connection (cf.~\cite[p.~344]{K}) is given by the matrix-valued 2-form
\begin{gather*}
R = P \wedge dP \wedge dP,
\end{gather*}
which we may write for short as $PdP^2$.

We will be interested in the closed $\CC$-valued $2$-form\footnote{It is customary to include a factor of $\pi$ in the denominator so that the resulting class is integral. In order to simplify the presentation we drop this factor since it has no impact on the results here.}
\begin{gather*}
\omega = \frac {1}{2i} \Tr\big(P dP^2\big) \in \Om^2 ( \cI_{k,n} ; \CC).
\end{gather*}
Concretely, for $X,Y \in T_P \cI_{k,n}$ given as above by
$X =\left[
\begin{smallmatrix}
0 & B \\
C & 0
\end{smallmatrix}\right]$ and
$Y =\left[
\begin{smallmatrix}
0 & D \\
E & 0
\end{smallmatrix}\right]$, we have
\begin{gather*}
\omega(X,Y) = \frac {1}{2i} \Tr(PXY - PYX) = \frac {1}{2i} \Tr (BE - DC) .
\end{gather*}
Note that for $X,Y \in T_P \cG_{k,n}$ we have
\begin{gather*}
\omega(X,Y) = \frac {1}{2i} \Tr(BD^* - DB^*) = \operatorname{Im} \Tr(BD^*).
\end{gather*}
Now it is clear that $\omega$ is a K\"ahler form on $ \cG_{k,n}$ since for any $X \in T_P \cG_{k,n}$ given by
$X =\left[
\begin{smallmatrix}
0 & B \\
B^* & 0
\end{smallmatrix}\right]$, and the $J$ operator on $T_P \cG_{k,n}$ defining the complex structure on $\cG_{k,n}$ given explicitly by
\begin{gather*}
J\left(
\begin{bmatrix}
0 & A \\
A^* & 0
\end{bmatrix}
\right)
=
\begin{bmatrix}
0 & -iA \\
iA^* & 0
\end{bmatrix},
\end{gather*}
we have
\begin{gather*}
\omega(X,JY) = \operatorname{Im} \Tr \big(B (-iD)^*\big) = \operatorname{Re} \Tr(B D^*),
\end{gather*}
which is a real inner product on $T_P \cG_{k,n}$.

For completeness we record that $\cI_{k,n}$ is a symplectic manifold, whose symplectic form $\operatorname{Re}(\omega)$ is tamed by many (possibly non-integrable) almost complex structures $J$, which by definition means that $\operatorname{Re}(\omega)(X,JX) >0$ for all $X\neq 0$. The form will be used in subsequent sections, though the operators $J$ considered in the proof will not.

\begin{thm}For each $0 < k < n$, the closed real valued $2$-form $\operatorname{Re}(\omega)$ is a symplectic form on the manifold $\cI_{k,n}$ whose restriction to $\cG_{k,n}$ is the K\"ahler form $\omega$.
\end{thm}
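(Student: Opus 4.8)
The plan is to establish three things: that $\operatorname{Re}(\omega)$ is closed, that it is non-degenerate on $\cI_{k,n}$, and that its restriction to $\cG_{k,n}$ recovers the K\"ahler form $\omega$. The first and third are nearly immediate from what precedes. Since $\omega$ is already known to be closed and $d$ commutes with taking real parts (writing $\omega = \alpha + i\beta$ with $\alpha,\beta$ real forms, $d\omega = 0$ forces $d\alpha = 0$), the form $\operatorname{Re}(\omega) = \alpha$ is closed. For the restriction, recall the earlier computation that on $\cG_{k,n}$, where $C = B^*$, one has $\omega(X,Y) = \operatorname{Im}\Tr(BD^*)$, which is already real; hence $\operatorname{Re}(\omega)$ and $\omega$ agree there, and the prior identity $\omega(X,JY) = \operatorname{Re}\Tr(BD^*)$ exhibits $\omega|_{\cG_{k,n}}$ as a K\"ahler form. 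So the entire content of the theorem reduces to verifying non-degeneracy of $\operatorname{Re}(\omega)$ on $\cI_{k,n}$.

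To prove non-degeneracy I would exhibit an almost complex structure taming $\operatorname{Re}(\omega)$; then if $\iota_X \operatorname{Re}(\omega) = 0$, evaluating against $J'X$ forces $X = 0$. The key point is to build $J'$ intrinsically from the subbundle structure rather than from $n \times n$ conjugate transposes, since the splitting $\CC^n = \operatorname{Im}(P) \oplus \operatorname{Ker}(P)$ need not be orthogonal away from $\cG_{k,n}$. Concretely, restrict the standard Hermitian inner product on $\CC^n$ to the subbundles $\operatorname{Im} \to \cI_{k,n}$ and $\operatorname{Ker} \to \cI_{k,n}$; writing a tangent vector as $X = \begin{bmatrix} 0 & B \\ C & 0 \end{bmatrix}$ with $B \in \operatorname{Hom}(\operatorname{Ker}(P), \operatorname{Im}(P))$ and $C \in \operatorname{Hom}(\operatorname{Im}(P), \operatorname{Ker}(P))$, let $B^*$ and $C^*$ denote the adjoints with respect to these restricted metrics, and set
\[
J'\begin{bmatrix} 0 & B \\ C & 0 \end{bmatrix} = \begin{bmatrix} 0 & -iC^* \\ iB^* & 0 \end{bmatrix}.
\]
Since $-iC^* \in \operatorname{Hom}(\operatorname{Ker}(P),\operatorname{Im}(P))$ and $iB^* \in \operatorname{Hom}(\operatorname{Im}(P),\operatorname{Ker}(P))$, the right-hand side is manifestly tangent to $\cI_{k,n}$ at $P$; and because the subbundles and their restricted Hermitian metrics vary smoothly, $J'$ is a smooth real-linear field of endomorphisms. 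A short check gives $J'^2 = -\operatorname{Id}$, so $J'$ is an almost complex structure on $\cI_{k,n}$.

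It then remains to compute the taming inequality. Using the formula $\omega(X,Y) = \frac{1}{2i}\Tr(BE - DC)$ with $Y = J'X$, so that $D = -iC^*$ and $E = iB^*$, I get $\omega(X,J'X) = \frac{1}{2i}\big(i\Tr(BB^*) + i\Tr(C^*C)\big) = \frac{1}{2}\big(\Tr(BB^*) + \Tr(C^*C)\big)$. Both traces are of positive semi-definite operators, so this quantity is real, non-negative, and strictly positive unless $B = C = 0$, i.e., unless $X = 0$; thus $\operatorname{Re}(\omega)(X, J'X) = \frac{1}{2}\big(\Tr(BB^*) + \Tr(C^*C)\big) > 0$ for all $X \neq 0$. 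Hence $J'$ tames $\operatorname{Re}(\omega)$, which yields non-degeneracy and completes the argument; as a consistency check, on $\cG_{k,n}$ one has $C = B^*$ and $J'$ reduces to the complex structure $J$ defining the K\"ahler structure there. The only genuine subtlety — and the step I would be most careful about — is the intrinsic definition of the adjoints and the resulting smoothness and well-definedness of $J'$, precisely because the image and kernel subbundles are not mutually orthogonal on $\cI_{k,n} \setminus \cG_{k,n}$; once $J'$ is correctly set up, the trace computation and the conclusion are routine.
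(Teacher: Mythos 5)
Your proof is correct and follows essentially the same route as the paper: the paper also reduces the theorem to non-degeneracy and exhibits the taming almost complex structure $X = \left[\begin{smallmatrix} 0 & B \\ C & 0 \end{smallmatrix}\right] \mapsto \left[\begin{smallmatrix} 0 & -iC^* \\ iB^* & 0 \end{smallmatrix}\right]$, computing $\operatorname{Re}(\omega)(X,JX) = \tfrac{1}{2}\Tr(BB^* + C^*C) > 0$ exactly as you do. Your extra care in defining the adjoints intrinsically via the restricted Hermitian metrics on $\operatorname{Im}(P)$ and $\operatorname{Ker}(P)$ is precisely what the paper's phrase ``for any choice of inner product on $\CC^n$'' is implicitly handling, so this is a welcome clarification rather than a divergence.
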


\begin{proof} For any choice of inner product on $\CC^n$, the $2$-form $\operatorname{Re}(\omega)$ is tamed by the almost complex structure on $\cI_{k,n}$ given by
\begin{gather*}
JX = J\left(
\begin{bmatrix}
0 & B \\
C & 0
\end{bmatrix}
\right)
=
\begin{bmatrix}
0 & -iC^* \\
iB^* & 0
\end{bmatrix},
\end{gather*}
since for $X\neq 0$
\begin{gather*}
\operatorname{Re}(\omega)(X,JX) = \operatorname{Re} \left ( \frac {1}{2i} \Tr ( B(iB^*) - (-iC^*)C) \right) = \frac 1 2 \Tr(BB^* + C^*C) > 0.\tag*{\qed}
\end{gather*} \renewcommand{\qed}{}
\end{proof}

The imaginary part of the $2$-form $\omega$, defined on $\cI_{k,n}$, is d-exact. To see this, consider the retraction $\cI_{k,n} \to \cG_{k,n}$ which assigns to a pair of complementary planes the unique orthogonal pair which keeps the first plane fixed. This is a fibration with contractible fibers, therefore a~cohomology isomorphism, and the restriction of the complex form $\omega$ to $\cG_{k,n}$ is real.

\section{The canonical map} \label{sec;Pmap}

\begin{defn}
Let $M$ be a $2k$-dimensional submanifold of $\R^n$, and let $J$ be an almost complex structure on $M$. At any point $x \in M$ we have
\begin{gather*}
\CC^n = \R^n \otimes \CC = ( T_x M \oplus N_x ) \otimes \CC = \big( T^{0,1}_x \oplus ( N_x \otimes \CC ) \big) \oplus T^{1,0}_x,
\end{gather*}
where $N_x$ is the fiber at $x$ of the normal bundle $N$ of $M$ with respect to the standard Euclidean metric.

The \emph{canonical map} $P\colon M \to \cI_{n-k,n}$ is defined for $x \in M$ so that $P(x)$ is the unique projection on $\CC^n$ with image equal to $\big( T^{0,1}_x \oplus ( N_x \otimes \CC) \big)$ and kernel equal to $T^{1,0}_x$. We'll write
\begin{gather*}
P = P^- \oplus \pi_{N_x \otimes \CC},
\end{gather*}
where $\pi_{N_x \otimes \CC}\colon \CC^n \to \CC^n$ is orthogonal projection onto $N_x \otimes \CC$.
\end{defn}

First we remark that we are using the non-compact Grassmannian in this section to obtain Theorem~\ref{thm;Pcanon} and its first two corollaries, which concern all almost complex structures. In the next section, for results concerning the non-existence of strongly integrable almost complex structures, it suffices to work with the compact Grassmannian.

Next we note there is obviously some choice here in defining the canonical map, where we have chosen the map to have image $\operatorname{Im}(P^-) \oplus ( N \otimes \CC )$, versus $\operatorname{Im}(P^+) \oplus ( N \otimes \CC )$, or $\operatorname{Im}(P^\pm)$. The choice of $\operatorname{Im}(P^-)= T^{0,1}$ is motivated by Lemma~\ref{lem:strongintequiv}, and we include the normal bundle since the covariant derivative on the sphere with standard metric has a normal component, as given in equation~\eqref{eq;NablaSphere} below.

\begin{rmk}The decomposition $TM \otimes \CC = \operatorname{Im}(P^+) \oplus \operatorname{Im}(P^-)$ is an orthogonal direct sum decomposition if and only if $J$ is orthogonal. It follows that the canonical map $P\colon M \to \cI_{n-k,n}$ factors through $\cG_{n-k,n}$ if and only if $J$ is orthogonal.
\end{rmk}

\begin{lem} \label{lem;Pcanonemb}
Let $n=1$ or $n=3$. For any $J$ on $S^{2n} \subset \R^{2n+1}$ the induced canonical map $P\colon S^{2n} \to \cI_{n+1,2n+1}$ is an immersion.
\end{lem}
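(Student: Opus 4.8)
The plan is to show that the differential $dP_x$ has trivial kernel at each point $x \in S^{2n}$. Since $P = P^- \oplus \pi_{N_x \otimes \CC}$, where $P^- = \frac{1}{2}(I + iJ)$ projects onto $T^{0,1}_x$ and $\pi_{N_x \otimes \CC}$ projects onto the complexified normal bundle, the first step is to compute the derivative of $P$ along a tangent direction $X \in T_x S^{2n}$. Differentiating $P^- = \frac{1}{2}(I + iJ)$ gives a contribution $\frac{i}{2} \nabla_X J$ coming from the variation of $J$, together with a contribution from the motion of the ambient subspaces $T^{0,1}$ and $N \otimes \CC$ inside the fixed trivial bundle $\CC^n$. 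For the sphere, the normal direction at $x$ is the position vector $x$ itself, so $\pi_{N_x \otimes \CC}$ is the rank-one orthogonal projection onto $\CC \cdot x$, and its derivative is completely explicit. I would use the standard formula for $\nabla$ on $S^{2n}$ with its induced normal component, namely equation~\eqref{eq;NablaSphere} referenced in the text, to keep track of both the tangential covariant derivative of $J$ and the normal piece arising from the second fundamental form.

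**Isolating the injectivity condition.**
Having written $dP_x(X)$ as an element of $T_{P(x)} \cI_{n+1,2n+1}$, the next step is to identify when this tangent vector vanishes. Recall from Section~\ref{sec;grass} that a tangent vector to $\cI_{k,n}$ at $P$ is block off-diagonal with respect to the splitting $\operatorname{Im}(P) \oplus \operatorname{Ker}(P) = (T^{0,1}_x \oplus (N_x \otimes \CC)) \oplus T^{1,0}_x$; thus $dP_x(X)$ is determined by its two off-diagonal blocks, the maps $T^{1,0}_x \to T^{0,1}_x \oplus (N_x \otimes \CC)$ and its companion. The key observation is that the normal-bundle contribution, coming from the second fundamental form of the round sphere, is nondegenerate: the position vector $x$ is moved in a definite way by every nonzero tangent direction. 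Concretely, the motion of the normal line $\CC \cdot x$ under the direction $X$ produces an off-diagonal component proportional to $X$ itself (pushed into $T^{1,0}$ or $T^{0,1}$), so that the normal part of $dP_x(X)$ already detects $X$.

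**The main obstacle and how to handle it.**
The hard part will be ensuring that the normal contribution to $dP_x(X)$ cannot be cancelled by the tangential contribution $\frac{i}{2}(\nabla_X J)$ for some nonzero $X$. These two contributions land in a priori different pieces of the tangent space to the Grassmannian: the term $\nabla_X J$ is an endomorphism internal to $TM \otimes \CC$ (mapping $T^{1,0} \leftrightarrow T^{0,1}$, since $\nabla J$ is $J$-anti-linear as noted after Definition~\ref{defn:strongint}), whereas the normal term involves the $N \otimes \CC$ summand. Because these land in genuinely independent blocks of $T_{P(x)}\cI_{n+1,2n+1}$ — the endomorphism block $\operatorname{Hom}(T^{1,0}, T^{0,1})$ versus the block involving $\operatorname{Hom}(T^{1,0}, N \otimes \CC)$ — they cannot cancel one another, and the nonvanishing of the normal block alone forces $dP_x(X) = 0$ to imply $X = 0$. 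I would make this block decomposition completely explicit and verify that the normal block of $dP_x(X)$ is, up to nonzero scalar, the inclusion of $X^{1,0}$ (or $X^{0,1}$) into $N_x \otimes \CC$, which vanishes precisely when $X = 0$. The restriction to $n = 1, 3$ enters only through the existence of the almost complex structure $J$ (spheres admit them only in dimensions $2$ and $6$), so the dimension count is not itself an obstruction to the injectivity argument.
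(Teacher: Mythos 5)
Your proposal is correct and is essentially the paper's own argument: both rest on the geometric fact that the normal direction $n_x$ of the round sphere moves tangentially ($d_X n_x = X$), so the normal-related block of $d_X P$ equals $P^+(X) = \tfrac{1}{2}(X - iJX)$, which is nonzero for real $X \neq 0$ and, lying in a block complementary to the $\nabla_X J$ contributions, cannot be cancelled by them. The paper simply packages your block-decomposition argument into one line by evaluating $(d_X P)(n_x) = (I-P)\,d_X(n_x) = P^+(X)$, which isolates exactly the sub-block you identify.
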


\begin{proof} The idea is to use the geometric fact that normal direction of the sphere moves tangentially. Explicitly, if $X \in T_xS^{2n}$ then $d_X P \in T_{P(x)} \cI_{n+1,2n+1}$ is a non-zero matrix since, for the real normal unit vector $n_x$ to $S^{2n}$ at $x$, we have
\begin{gather*}
(d_XP)(n_x) = d_X(P(n_x)) - P d_X(n_x) = (I-P) d_X(n_x) = P^+(X) = \frac 1 2 (X - i J X).
\end{gather*}
So, $d_X P$ is non-zero whenever $X$ is non-zero.
\end{proof}

Before stating the main result of this section, we make the following remarks. An almost complex structure $J$ satisfies $\Tr(J)=0$ at every point, $\Tr(\nabla_X J) =0$ for every tangent vector $X$, and of course, $\Tr ([\nabla_X J, \nabla_Y J] )=0$ for any two tangent vectors $X$ and $Y$, where $[-,-]$ denotes the commutator. These also hold for the $\CC$-linear extension of these operators to $TM \otimes \CC$.

 Now, for any $X$ and $Y$, the commutator $[\nabla_X J, \nabla_Y J]$ preserves the splitting $TM \otimes \CC = T^{1,0} \oplus T^{0,1}$ since it commutes with $J$. The statement below pertains to the restriction of $[\nabla_X J, \nabla_Y J]$ to $T^{0,1}$ which is denoted by
 \begin{gather*}
[ \nabla_X J , \nabla_Y J ] \big|_{T^{0,1}} \colon \ T^{0,1} \to T^{0,1}.
 \end{gather*}
 Note that $ [ \nabla_X J , \nabla_Y J ] \big|_{T^{0,1}}$ is the difference of two compositions $T^{0,1} \to T^{1,0} \to T^{0,1}$, in opposite order, so the trace need not be zero, and is a priori an arbitrary complex number.

Recall the form $\operatorname{Re}(\omega) = \operatorname{Re}\big( \frac{1}{ 2i} \Tr\big(PdP^2\big) \big) \in \Omega^2(\cI_{n+1,2n+1};\CC)$ from Section~\ref{sec;grass}, and the cano\-ni\-cal map $P\colon S^{2n} \to \cI_{n+1,2n+1}$ induced by an arbitrary~$J$.

\begin{thm} \label{thm;Pcanon}
Let $\langle -,- \rangle $ be the Euclidean metric of $\R^{2n+1}$ where $n=1$ or $n=3$. Let $J$ be any almost complex structure on the unit sphere $S^{2n} \subset \R^{2n+1}$, with induced metric and Levi-Civita connection $\nabla$. Let $P\colon S^{2n} \to \cI_{n+1,2n+1}$ be the induced canonical map. The pullback of $\operatorname{Re}(\omega)$ along $P$ is given by
\begin{gather*}
 P^* \operatorname{Re} ( \omega) (X,Y) =-\frac{1}{8} \operatorname{Im} \Tr \big([ \nabla_X J , \nabla_Y J ] \big|_{T^{0,1}} \big)
 - \frac 1 4 \langle X,JY \rangle + \frac 1 4 \langle Y,JX \rangle .
\end{gather*}
\end{thm}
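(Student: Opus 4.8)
The plan is to pull back the defining formula for $\omega$ through the canonical map and reduce everything to the two off-diagonal blocks of the differential $d_X P$. Since $P^* \omega(X,Y) = \omega(d_X P, d_Y P) = \frac{1}{2i}\Tr\big(P[d_X P, d_Y P]\big)$, and since differentiating $P^2 = P$ shows that each $d_X P$ is off-diagonal with respect to the splitting $\CC^{2n+1} = \operatorname{Im}(P) \oplus \operatorname{Ker}(P) = \big(T^{0,1}_x \oplus (N_x \otimes \CC)\big) \oplus T^{1,0}_x$, I would record $d_X P$ in block form with blocks $B_X \colon \operatorname{Ker}(P) \to \operatorname{Im}(P)$ and $C_X \colon \operatorname{Im}(P) \to \operatorname{Ker}(P)$, exactly as in Section~\ref{sec;grass}. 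The pullback then becomes $P^*\omega(X,Y) = \frac{1}{2i}\Tr(B_X C_Y - B_Y C_X)$, a finite-dimensional trace over $\operatorname{Im}(P)$ (equivalently over $T^{1,0}_x$).

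The heart of the computation is to find these blocks explicitly, using three inputs: that the unit normal moves tangentially, $d_X n_x = X$ (as already exploited in Lemma~\ref{lem;Pcanonemb}); the Gauss formula $d_X Y = \nabla_X Y - \langle X, Y\rangle n_x$ for the round sphere, equation~\eqref{eq;NablaSphere}; and the $J$-anti-linearity $(\nabla_X J)J = -J(\nabla_X J)$. Writing $P = P^- \oplus \pi_{N_x \otimes \CC}$ and differentiating $P^- = \frac12(I + iJ)$ applied to a tangent field together with $d_X \pi_{N_x\otimes\CC}$, I get for a real tangent vector $Y$ a formula of the shape $(d_X P)Y = \frac{i}{2}(\nabla_X J)Y + \frac12\big(\langle X,Y\rangle - i\langle X, JY\rangle\big) n_x$, while $(d_X P) n_x = P^+ X = \frac12(X - iJX)$. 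Decomposing along $T^{1,0}$ and $T^{0,1}$ (using that $(\nabla_X J)$ swaps the two summands, so that $(I\pm iJ)(\nabla_X J) = 2P^{\mp}(\nabla_X J)$) then yields the blocks: on $T^{0,1}$ one finds $C_X(Y + iJY) = i P^+\big((\nabla_X J)Y\big)$ with \emph{no} normal contribution; on $N_x\otimes\CC$ one finds $C_X(n_x) = P^+ X$; and on $T^{1,0}$ one finds $B_X(Z - iJZ) = i P^-\big((\nabla_X J)Z\big) + \big(\langle X,Z\rangle - i\langle X, JZ\rangle\big) n_x$.

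With the blocks in hand I would compute $\Tr(C_Y B_X)$ over $T^{1,0}$, which splits into two pieces. The piece in which $B_X$ lands in $T^{0,1}$ and is then returned by $C_Y$ gives, after using that $(\nabla_X J)(\nabla_Y J)$ commutes with $J$ and hence preserves $T^{1,0}$, the operator $-\tfrac14 (\nabla_Y J)(\nabla_X J)\big|_{T^{1,0}}$; the piece routed through the normal line $\CC n_x$ gives a rank-one contribution of trace $\tfrac12\big(\langle X,Y\rangle - i\langle X, JY\rangle\big)$. Antisymmetrizing in $X,Y$, the symmetric $\langle X,Y\rangle$ terms cancel and I obtain $\Tr(B_X C_Y - B_Y C_X) = \tfrac14 \Tr\big([\nabla_X J, \nabla_Y J]\big|_{T^{1,0}}\big) - \tfrac{i}{2}\big(\langle X, JY\rangle - \langle Y, JX\rangle\big)$. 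Multiplying by $\frac{1}{2i}$ and taking the real part converts the first trace into $\tfrac18 \operatorname{Im}\Tr\big([\nabla_X J, \nabla_Y J]\big|_{T^{1,0}}\big)$; since $[\nabla_X J, \nabla_Y J]$ is a real operator and $T^{0,1} = \overline{T^{1,0}}$, its restrictions to the two eigenbundles have conjugate traces, so this equals $-\tfrac18 \operatorname{Im}\Tr\big([\nabla_X J, \nabla_Y J]\big|_{T^{0,1}}\big)$, while the metric piece becomes $-\tfrac14 \langle X, JY\rangle + \tfrac14 \langle Y, JX\rangle$, which is the claimed formula.

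I expect the main obstacle to be the bookkeeping in computing $d_X P$: one must retain the normal-bundle contribution, which is exactly what produces the $\langle X, JY\rangle$ terms and distinguishes this calculation from a purely tangential LeBrun-type argument, and one must correctly track the interchange of the $T^{1,0}$ and $T^{0,1}$ summands under $\nabla_X J$ and under the projections $P^{\pm}$. Once the blocks $B_X$ and $C_X$ are pinned down, the remainder is formal trace manipulation and the conjugation symmetry between the two eigenbundles.
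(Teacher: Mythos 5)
Your proposal is correct and follows essentially the same route as the paper: both compute the off-diagonal blocks of $d_X P$ via the Gauss formula~\eqref{eq;NablaSphere} and $d_X n_x = X$, obtaining exactly the paper's blocks ($\tfrac{i}{2}\nabla_X J$ on $T^{0,1}$ with no normal term, $P^+X$ on $n_x$, and $\tfrac{i}{2}\nabla_X J + \langle X, -\rangle n_x$ on $T^{1,0}$), and then evaluate $\tfrac{1}{2i}\Tr(B_XC_Y - B_YC_X)$. The only cosmetic difference is that you take the trace over $\operatorname{Ker}(P)=T^{1,0}$ (using cyclicity, with the normal line entering as a rank-one operator) and then pass to $T^{0,1}$ via the conjugate-trace symmetry, whereas the paper traces directly over $\operatorname{Im}(P)=T^{0,1}\oplus(N_x\otimes\CC)$ in an orthonormal basis; both yield the stated formula.
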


\begin{proof} Let $X$ and $Y$ be fixed tangent vectors at a point $x$. As in Section~\ref{sec;grass}, in any complex basis of $(T_xM \otimes \CC) \oplus (N_x \otimes \CC)$ for which the projection $P(x)$ is of the form
$P(x) = \left[\begin{smallmatrix}
1 & 0 \\
0& 0
\end{smallmatrix}\right]$ we have
$d_X P =\left[
\begin{smallmatrix}
0 & B \\
C& 0
\end{smallmatrix}\right] $ for complex matrices $B\colon T^{1,0}_x \to T^{0,1}_x \oplus ( N_x \otimes \CC )$ and $C \colon T^{0,1}_x \oplus ( N_x \otimes \CC ) \to T^{1,0}_x$, which are the restriction of $d_X P$ to their appropriate domains.

On the sphere with the standard Euclidean metric we have
\begin{gather} \label{eq;NablaSphere}
d_X W = \nabla_X W - \langle X,W \rangle n_x,
\end{gather}
where $n_x$ is the outward unit normal vector at $x$. This equation also holds for complex vector fields $W$ by extending the structures linearly over $i \in \CC$. For the remainder of this proof (only), we will reserve the notation $ \langle - , - \rangle$ for the extension of the standard metric in $\R^{2n+1}$ to a~bilinear map which is $\CC$-linear \emph{in both entries}.

We first show that
\begin{gather*}
B= \frac i 2 (\nabla_X J) + \langle X , \, -\, \rangle n_x,
\end{gather*}
which is not surprising since $P = \frac 1 2 (I + i J) \oplus \pi_{N_x \otimes \CC}$.
Explicitly, for any $Z - i JZ \in T^{1,0} $ we have
\begin{gather*}
B(Z-iJZ) = (d_XP)(Z-iJZ) = d_X(P(Z - iJZ)) - P(d_X(Z-iJZ)) \\
\hphantom{B(Z-iJZ)}{} = - P(d_X(Z-iJZ)) = -P ( \nabla_X (Z-iJZ) - \langle X,Z - iJZ \rangle n_x )\\
\hphantom{B(Z-iJZ)}{}= - P^- \nabla_X (Z-iJZ) + \langle X,Z - iJZ \rangle n_x ,
\end{gather*}
where the last equality follows from the definition $P\big|_{TM \otimes \CC} = P^- $ and $P(n_x) = n_x$. Now using $P^- = \frac 1 2 (I + i J)$ we have
\begin{gather*}
 -P^- \nabla_X (Z-iJZ) = -P^- \nabla_X (P^+ Z) = \nabla_X ( -P^- P^+ Z) + \nabla_X (P^-) (P^+ Z) \\
 \hphantom{-P^- \nabla_X (Z-iJZ)}{} =\frac 1 2 \nabla_X (I +iJ) (Z-iJZ) = \frac i 2 (\nabla_X J )(Z- i J Z),
\end{gather*}
which completes the proof that $B= \frac i 2 (\nabla_X J) + \langle X , \, -\, \rangle n_x$.

Also, $C \big|_{T^{0,1}} = \frac{i}{ 2} (\nabla_X J) $ since
\begin{gather*}
C(Z+iJZ) = (d_X P)(Z+iJZ) = d_X(P(Z + iJZ)) - P(d_X(Z+iJZ)) \\
\hphantom{C(Z+iJZ)}{} =d_X(Z + iJZ) - P(d_X(Z+iJZ)) = (I-P)d_X (Z+iJZ) \\
\hphantom{C(Z+iJZ)}{} = (I-P) ( \nabla_X (Z+iJZ) - \langle X,Z + iJZ \rangle n_x ) \\
\hphantom{C(Z+iJZ)}{} =(I-P) ( \nabla_X (Z+iJZ) ) = P^+ ( \nabla_X (Z+iJZ))
\end{gather*}
and
\begin{gather*}
 P^+ ( \nabla_X (Z+iJZ) ) = P^+ ( \nabla_X (P^- Z) ) \\
 \hphantom{P^+ ( \nabla_X (Z+iJZ) )}{} = \nabla_X \big( P^+ P^- Z\big)- \nabla_X\big(P^+\big) (P^-Z) = \frac{i}{2} (\nabla_X J) (Z + i J Z).
 \end{gather*}
 Finally,
\begin{gather*}
C(n_x) = \frac 1 2 (X - i J X)
\end{gather*}
as in the proof of Lemma~\ref{lem;Pcanonemb}.

It follows that if $d_YP =\left[
\begin{smallmatrix}
0 & D \\
E& 0
\end{smallmatrix}\right]$ then
\begin{gather*}
D(Z-iJZ) = \frac i 2 (\nabla_Y J) (Z-iJZ) + \langle Y , Z-iJZ \rangle n_x, \\
E (Z+iJZ) = \frac{i}{ 2} (\nabla_Y J)(Z+iJZ) ,\\
E(n_x) = \frac 1 2 (Y - i J Y),
\end{gather*}
so that
\begin{gather*}
BE (Z+iJZ) = -\frac{1}{4} (\nabla_X J) (\nabla_Y J)(Z+iJZ) + \left\langle X , \frac{i}{ 2} (\nabla_Y J)(Z+iJZ) \right\rangle n_x, \\
BE(n_x) = \frac i 4 (\nabla_X J) (Y - i J Y) + \frac 1 2 \langle X , (Y - i J Y) \rangle n_x, \\
DC(Z+iJZ) = -\frac{1}{4} (\nabla_Y J) (\nabla_X J)(Z+iJZ) + \left\langle Y , \frac{i}{ 2} ( \nabla_X J (Z+iJZ)) \right\rangle n_x \\
DC(n_x) = \frac i 4 (\nabla_Y J) (X-iJX) + \frac 1 2\langle Y , X-iJX \rangle n_x.
\end{gather*}

The matrix $BE-DC$ is a $\CC$-linear endomorphism of $\operatorname{Im}(P) = T^{0,1}_x \oplus (N_x \otimes \CC)$. Let $Z_k + i J Z_k$ for $k=1,\ldots ,n$ be an orthonormal basis over $\CC$ for $T^{0,1}_x$, so that this set along with the vector~$n_x$ is an orthonormal basis over $\CC$ for $\operatorname{Im}(P)$. For the remainder of this proof, let $\langle\langle - , - \rangle\rangle $ denote the standard inner product on $\CC^{2n+1}$, which is $\CC$-linear in the first entry, and conjugate linear in the second entry.

Then
\begin{gather*}
\Tr(BE-DC) = \sum_k \langle\langle (BE-DC) (Z_k + i J Z_k), Z_k + i J Z_k \rangle\rangle + \langle\langle (BE-DC)n_x,n_x \rangle\rangle \\
=\sum_k \langle\langle (BE-DC)( Z_k + i J Z_k), Z_k + i J Z_k \rangle\rangle + \frac 1 2 \langle X , (Y - i J Y) \rangle - \frac 1 2 \langle Y , X-iJX \rangle \\
= \sum_k \left\langle\left\langle -\frac{1}{4} (\nabla_X J) (\nabla_Y J)(Z_k + i J Z_k) +
\frac{1}{4} (\nabla_Y J) (\nabla_X J)(Z_k + i J Z_k)
, Z_k + i J Z_k \right\rangle\right\rangle \\
\quad{} + \frac 1 2 \langle X , Y\rangle - \frac 1 2 \langle Y , X \rangle + \frac i 2 ( \langle Y , JX \rangle - \langle X , JY \rangle ) \\
= \frac{1}{4} \sum_k \langle\langle [ \nabla_Y J , \nabla_X J ] (Z_k + i J Z_k)
, Z_k + i J Z_k \rangle\rangle + \frac i 2 ( \langle Y , JX \rangle - \langle X , JY \rangle ) .
\end{gather*}
Since $\operatorname{Re}(\omega) = \operatorname{Re} \big( \frac{1}{2i} \Tr(BE-DC)\big)$ we have
\begin{gather*}
 P^* (\operatorname{Re} \omega ) (X,Y)
= -\frac{1}{8} \operatorname{Im} \Tr \big( [ \nabla_X J , \nabla_Y J ] \big|_{T^{0,1}} \big)
 - \frac 1 4 \langle X,JY \rangle + \frac 1 4 \langle Y,JX \rangle. \tag*{\qed}
 \end{gather*}\renewcommand{\qed}{}
\end{proof}

For the following corollaries we continue to consider the round sphere with standard metric $\langle - ,- \rangle$ and Levi-Civita connection~$\nabla$.

\begin{cor} \label{cor;commute} No almost complex structure $J$ on $S^6$ satisfies\footnote{This result, and all of those below that rely on $H^2\big(S^6\big) = 0$, can be sharpened to state that the ``disqualifying condition'' cannot happen along the image of any closed $J$-holomorphic curve $\Sigma \to S^6$ which is an immersion on a~full measure subset, by Stokes' theorem. One can show that for an integrable $J$ on $S^6$, there is at most a~discrete set of (image) curves which are not multiply covered. The sharpened statements will not be mentioned in the subsequent corollaries.}
that $\nabla_X J$ and $\nabla_{JX} J$ are eve\-ry\-where commuting for all vectors~$X$.
\end{cor}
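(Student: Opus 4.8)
The plan is to feed the hypothesis directly into Theorem~\ref{thm;Pcanon} and convert the vanishing of the commutators into a taming (positivity) statement, then derive a contradiction from $H^2\big(S^6\big)=0$. First I would substitute $Y=JX$ into the formula of Theorem~\ref{thm;Pcanon}. The hypothesis asserts $[\nabla_X J,\nabla_{JX} J]=0$ for every $X$, so in particular its restriction to $T^{0,1}$ is the zero operator and the $\operatorname{Im}\Tr$ term drops out. Using $J^2=-\operatorname{Id}$, the two surviving metric terms simplify to $-\frac14\langle X,J^2X\rangle+\frac14\langle JX,JX\rangle=\frac14\big(|X|^2+|JX|^2\big)$, which is strictly positive whenever $X\neq 0$ (note this requires no orthogonality assumption on $J$, since both summands are nonnegative and $|X|^2>0$). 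Hence the closed $2$-form $\sigma:=P^*\operatorname{Re}(\omega)$ satisfies $\sigma(X,JX)>0$ for all nonzero $X$; that is, $\sigma$ tames $J$ on all of $S^6$.

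The second step is the standard consequence that a taming $2$-form on a $6$-manifold is nondegenerate: if $\sigma(u,\cdot)=0$ for some $u\neq 0$ then in particular $\sigma(u,Ju)=0$, contradicting taming, so $\sigma$ is nondegenerate everywhere and $\sigma^3$ is a volume form whose sign is fixed by the orientation determined by $J$. Consequently $\int_{S^6}\sigma^3\neq 0$. On the other hand, $\operatorname{Re}(\omega)$ is closed (as $\omega$ is), so $\sigma=P^*\operatorname{Re}(\omega)$ is closed, and since $H^2\big(S^6;\R\big)=0$ it is exact, say $\sigma=d\alpha$. Then $\sigma^3=d(\alpha\wedge\sigma\wedge\sigma)$ is itself exact, so $\int_{S^6}\sigma^3=0$ by Stokes' theorem. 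This contradicts $\int_{S^6}\sigma^3\neq 0$, and the corollary follows.

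I expect the only genuine subtlety to lie in the bookkeeping of the first step rather than in any hard computation: one must check that the hypothesis ``$\nabla_X J$ and $\nabla_{JX} J$ commute for all $X$'' kills precisely the trace term in Theorem~\ref{thm;Pcanon}, and that the remaining expression is manifestly positive. The taming-implies-volume-form fact and the exactness argument are both routine; the essential mechanism is simply that positivity forces $\int_{S^6}\sigma^3>0$ while exactness forces it to vanish, so no such $J$ can exist. The footnote's sharpening, localizing the obstruction to images of closed $J$-holomorphic curves, would then follow by replacing the global Stokes' argument with one applied along such a curve.
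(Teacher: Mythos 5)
Your proposal is correct and follows essentially the same route as the paper: substitute $Y=JX$ into Theorem~\ref{thm;Pcanon}, note the hypothesis kills the trace term, and conclude that $P^*\operatorname{Re}(\omega)$ is a closed $2$-form taming $J$, contradicting $H^2\big(S^6\big)=0$. The only difference is that the paper states the final contradiction tersely (``a closed non-degenerate $2$-form on $S^6$ is impossible''), whereas you spell out the standard Stokes'-theorem argument via $\sigma^3$; this is a correct filling-in of an implicit step, not a different approach.
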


\begin{proof}By Theorem~\ref{thm;Pcanon}, if $\nabla_X J$ and $\nabla_{JX} J$ always commute then
\begin{gather*}
P^* (\operatorname{Re} \omega ) (X,JX) = \frac 1 4 \langle X,X\rangle + \frac 1 4 \langle JX,JX\rangle > 0,
\end{gather*}
implying $S^6$ has a closed non-degenerate $2$-form, which is a contradiction since $H^2\big(S^6\big) = 0$.
\end{proof}

More generally, we conclude that the topology of $S^6$ restricts the possible imaginary trace of $ [ \nabla_X J , \nabla_{JX} J ] \big|_{T^{0,1}} $.

\begin{cor}For any almost complex structure $J$ on $S^6$, there is some non-zero tangent vector $X$ at some point in $S^6$ such that
\begin{gather*}
\operatorname{Im} \Tr \big( [ \nabla_X J , \nabla_{JX} J] \big|_{T^{0,1}} \big) \geq 2 \big( \|X \|^2 + \|JX \|^2 \big).
\end{gather*}
In particular, there is a unit tangent vector $X$ for which
\begin{gather*}
\operatorname{Im} \Tr \big( [ \nabla_X J , \nabla_{JX} J ] \big|_{T^{0,1}} \big) \geq 2.
\end{gather*}
\end{cor}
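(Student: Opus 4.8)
The plan is to argue by contradiction, reducing the claim to the topological obstruction already used in the proof of Corollary~\ref{cor;commute}. Suppose the asserted inequality failed everywhere, i.e., that at every point and for every nonzero tangent vector $X$ one had
\begin{gather*}
\operatorname{Im} \Tr \big( [ \nabla_X J , \nabla_{JX} J] \big|_{T^{0,1}} \big) < 2 \big( \|X \|^2 + \|JX \|^2 \big).
\end{gather*}
I would then specialize Theorem~\ref{thm;Pcanon} to the pair $(X,JX)$. Using $J^2 = -\operatorname{Id}$ gives $\langle X, J(JX)\rangle = -\|X\|^2$ and $\langle JX, JX\rangle = \|JX\|^2$, so the theorem reads
\begin{gather*}
P^* \operatorname{Re}(\omega)(X,JX) = -\tfrac18 \operatorname{Im}\Tr\big([\nabla_X J, \nabla_{JX}J]\big|_{T^{0,1}}\big) + \tfrac14\big(\|X\|^2 + \|JX\|^2\big).
\end{gather*}

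The assumed strict inequality bounds the first term from below by $-\tfrac14(\|X\|^2 + \|JX\|^2)$, whence $P^*\operatorname{Re}(\omega)(X,JX) > 0$ for all $X \neq 0$. In particular $P^*\operatorname{Re}(\omega)$ is tamed by $J$ and therefore non-degenerate; being the pullback of the closed form $\operatorname{Re}(\omega)$ it is closed, hence a symplectic form on $S^6$. This contradicts $H^2(S^6) = 0$ exactly as in Corollary~\ref{cor;commute} (a closed non-degenerate $2$-form on a closed $6$-manifold has non-exact cube, hence nonzero class). Thus there must exist a nonzero $X$ at some point realizing the inequality $\geq 2(\|X\|^2 + \|JX\|^2)$, which is the first assertion.

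Finally, for the ``in particular'' statement I would exploit homogeneity. Since $\nabla_X J$ is linear in $X$ and $\nabla_{JX}J$ is linear in $X$ (because $JX$ is), the commutator $[\nabla_X J, \nabla_{JX}J] = t^2[\nabla_X J,\nabla_{JX}J]$ under $X \mapsto tX$ for real $t$; as restriction to the ($X$-independent) subspace $T^{0,1}$, the trace, and $\operatorname{Im}$ are all $\R$-linear, the left-hand side is homogeneous of degree two in $X$, and the right-hand side $2(\|X\|^2 + \|JX\|^2)$ clearly is as well. Rescaling the $X$ found above to a unit vector therefore preserves the inequality, and then $\|X\|^2 = 1$ together with $\|JX\|^2 \geq 0$ yields $\operatorname{Im}\Tr([\nabla_X J, \nabla_{JX}J]|_{T^{0,1}}) \geq 2$.

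I would expect no serious obstacle here, since all the analytic work is carried by Theorem~\ref{thm;Pcanon}. The only points needing care are matching the strict versus non-strict inequalities correctly in the contradiction step, and verifying degree-two homogeneity so that normalization to a unit vector is legitimate.
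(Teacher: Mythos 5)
Your proposal is correct and is essentially identical to the paper's own (much terser) proof: negate the inequality everywhere, plug $Y=JX$ into Theorem~\ref{thm;Pcanon} to get $P^*\operatorname{Re}(\omega)(X,JX)>0$, conclude that $P^*\operatorname{Re}(\omega)$ is a closed non-degenerate $2$-form contradicting $H^2\big(S^6\big)=0$, and obtain the unit-vector case by degree-two homogeneity with $\|JX\|^2\geq 0$. Your filled-in details (the sign computation, taming implies non-degeneracy, and the non-exact cube argument) are all accurate.
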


\begin{proof}
If the inequality were false for all $X$, then by Theorem \ref{thm;Pcanon} $P^*(\operatorname{Re} \omega)$ would be non-degenerate, thereby contradicting $H^2\big(S^6\big) = 0$. The second case follows from the first with $\|X\|=1$.
\end{proof}

\begin{cor}[Blanchard \cite{Bl}, Lebrun \cite{L}] \label{cor;L}No orthogonal almost complex structure on the round sphere $S^6$ is integrable.
\end{cor}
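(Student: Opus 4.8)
The plan is to argue by contradiction, manufacturing a symplectic form on $S^6$ from the pullback formula of Theorem~\ref{thm;Pcanon}. Suppose $J$ is both orthogonal and integrable. Since $J$ is compatible with the round metric, Theorem~\ref{thm;Jintorth} shows that $J$ is strongly integrable with respect to the Levi-Civita connection, i.e., $\nabla_{JX}J = J\nabla_X J$ for all $X$. (Orthogonality also guarantees, by the Remark, that the canonical map factors through the compact Grassmannian $\cG_{4,7}$, on which $\omega$ is K\"ahler, so one expects the pullback to be positive; but the computation below is self-contained.) I would then evaluate Theorem~\ref{thm;Pcanon} at $(X,JX)$ and show that the resulting closed $2$-form $P^*\operatorname{Re}(\omega)$ is non-degenerate, which is impossible since $H^2(S^6)=0$, exactly as in Corollary~\ref{cor;commute}.

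The key step is to control the trace term when $Y = JX$. Set $A = \nabla_X J$. Two facts are at hand: $A$ anti-commutes with $J$, since $\nabla J$ is $J$-anti-linear in its second argument, and $A$ is skew-adjoint, since orthogonality gives $J^* = -J$ and hence $(\nabla_X J)^* = -\nabla_X J$. Using strong integrability to write $\nabla_{JX}J = JA$, I compute
\begin{gather*}
[\nabla_X J, \nabla_{JX}J] = [A, JA] = AJA - JA^2 = -JA^2 - JA^2 = -2JA^2,
\end{gather*}
where $AJA = -JA^2$ follows from $AJ = -JA$. Since $A^2$ commutes with $J$ it preserves $T^{0,1}$, on which $J$ acts as $-i$, so $[A,JA]\big|_{T^{0,1}} = 2iA^2\big|_{T^{0,1}}$ and therefore
\begin{gather*}
\operatorname{Im}\Tr\big([\nabla_X J, \nabla_{JX}J]\big|_{T^{0,1}}\big) = \Tr_{\R}\big((\nabla_X J)^2\big) \le 0,
\end{gather*}
the inequality because the real trace of the square of a skew-symmetric operator is non-positive.

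Substituting $Y = JX$ into Theorem~\ref{thm;Pcanon}, the two metric terms give $\frac14\langle X,X\rangle + \frac14\langle JX,JX\rangle$, just as in the proof of Corollary~\ref{cor;commute}, while the trace term contributes $-\frac18\Tr_{\R}((\nabla_X J)^2)\ge 0$. Hence
\begin{gather*}
P^*\operatorname{Re}(\omega)(X,JX) = -\frac18\Tr_{\R}\big((\nabla_X J)^2\big) + \frac14\|X\|^2 + \frac14\|JX\|^2 > 0
\end{gather*}
for every $X \ne 0$, so $J$ tames the closed form $P^*\operatorname{Re}(\omega)$ and it is non-degenerate, giving the contradiction. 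I expect the only delicate part to be the trace computation of the middle paragraph: the sign hinges on combining skew-adjointness, the anti-commutation $AJ = -JA$, and the action of $J$ as $-i$ on $T^{0,1}$. Everything else is quoted from the earlier results or is the same cohomological obstruction already used for Corollary~\ref{cor;commute}.
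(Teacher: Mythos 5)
Your proof is correct and takes essentially the same approach as the paper: both reduce to strong integrability via Theorem~\ref{thm;Jintorth}, compute $[\nabla_X J,\nabla_{JX}J]=-2J(\nabla_X J)^2$, evaluate Theorem~\ref{thm;Pcanon} at $(X,JX)$, and use skew-adjointness of $\nabla_X J$ (from orthogonality plus metric compatibility of $\nabla$) to get the favorable sign, contradicting $H^2\big(S^6\big)=0$. The only cosmetic difference is in the final trace step: you use the identity $\operatorname{Im}\Tr\big(2i(\nabla_X J)^2\big|_{T^{0,1}}\big)=\Tr_{\R}\big((\nabla_X J)^2\big)\leq 0$ abstractly, while the paper expands the same quantity in an orthonormal basis of $T^{0,1}$ to obtain $2\sum_k \|(\nabla_X J)Z_k\|^2\geq 0$ --- the same fact.
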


Both \cite{Bl} and \cite{L} use in their arguments a certain $J$-holomorphic map, which is closely related to the canonical map we're using here. A~similar approach will be taken in the next section to conclude a stronger result. Here we'll re-prove the claim by calculation, using Theorem~\ref{thm;Pcanon}.

\begin{proof}
If $J$ were integrable and orthogonal, then by Theorem \ref{thm;Jintorth}, $J$ is strongly integrable with respect to $\nabla$, i.e., $\nabla_{JX} J = J \nabla_X J$
for any $X$. Then
\begin{gather*}
 [\nabla_X J, \nabla_{JX} J] = ( \nabla_X J J \nabla_{X} J - J \nabla_X J \nabla_{X} J )=-2 J ( \nabla_X J )^2,
\end{gather*}
so that
\begin{gather*}
\big( P^*\operatorname{Re} \omega\big)(X,JX) = \frac{1}{4} \operatorname{Im} \Tr\big( J ( \nabla_X J )^2 \big|_{T^{0,1}} \big) + \frac 1 4 \langle X,X\rangle + \frac 1 4 \langle JX,JX \rangle .
\end{gather*}
Now, letting $Z_k +iJZ_k$ for $k= 1, \ldots , n$ be an orthonormal basis for $T^{0,1}$ we have
\begin{gather*}
\operatorname{Im} \Tr\big( J ( \nabla_X J )^2 \big|_{T^{0,1}} \big) = \operatorname{Im} \sum_k \big\langle\big\langle J ( \nabla_X J )^2 (Z_k +iJZ_k), Z_k +iJZ_k \big\rangle\big\rangle \\
\hphantom{\operatorname{Im} \Tr\big( J ( \nabla_X J )^2 \big|_{T^{0,1}} \big)}{} =
\sum_k \big( \big\langle\big\langle J (\nabla_X J)^2 JZ_k, Z_k \big\rangle\big\rangle -
 \big\langle\big\langle J (\nabla_X J)^2 Z_k , JZ_k \big\rangle\big\rangle \big) \\
\hphantom{\operatorname{Im} \Tr\big( J ( \nabla_X J )^2 \big|_{T^{0,1}} \big)}{}
 = -2 \sum_k \big\langle\big\langle (\nabla_X J)^2 Z_k, Z_k \big\rangle\big\rangle = 2 \sum_k \| (\nabla_X J) Z_k \|^2,
\end{gather*}
where in the last step we used that $J$ is skew, so that $(\nabla_X J)^* = ( \nabla_X J^* ) = - \nabla_X J $. This shows $(P^* \operatorname{Re} \omega)(X,JX) > 0$, which again contradicts $H^2\big(S^6\big) = 0$.
\end{proof}

It does not appear that using Theorem \ref{thm;Pcanon} alone we can relax the previous theorem's hypotheses (of integrable and orthogonal)
to strong integrability; it is possible that $\operatorname{Im} \Tr\big( J ( \nabla_X J )^2 \big|_{T^{0,1}} \big)$ is negative and large in absolute value. In the next section we provide a more conceptual way to show that there are no almost complex structures on $S^6$ which are strongly integrable with respect to the Levi-Civita connection on the round sphere.

\section{Grassmann map} \label{sec;Gmap}

In this section we let $n=1$ or $n=3$, $S^{2n} \subset \R^{2n+1}$ be the unit sphere with respect to the standard Euclidean metic, and let $\nabla$ be the Levi-Civita connection. Recall that at any point $x \in S^{2n}$ we have
\begin{gather*}
\CC^{2n+1} = \big( T^{0,1}_x \oplus (N_x \otimes \CC) \big) \oplus T^{1,0}_x.
\end{gather*}

\begin{defn}For any $J$ on $S^{2n}$, the induced map $P^\perp\colon S^{2n} \to \cG_{n+1,2n+1}$ is defined so that~$P^\perp(x)$ is the orthogonal projection of $\CC^{2n+1}$ onto $T^{0,1}_x \oplus (N_x \otimes \CC)$.
\end{defn}

Lebrun \cite{L} considers the closely related map which is given by orthogonal projection onto~$T^{0,1}_x$, and shows it is an embedding. We'll see below that by including the normal bundle in the new map above, the prohibited condition on~$S^6$ becomes ``strongly integrable'' rather than ``integrable and orthogonal''. This again is motivated by the normal component in equation~\eqref{eq;NablaSphere}.

Theorem \ref{thm;delbarPperp} and Corollary~\ref{cor;noStrongJ} below are very much inspired by the author's reading of~\cite{L}. First, we show this map is an immersion, which is sufficient for our purposes.

\begin{lem} For any $J$ on $S^{2n}$, the map $P^\perp\colon S^{2n} \to \cG_{n+1,2n+1}$ is an immersion.
\end{lem}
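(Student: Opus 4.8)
The plan is to mimic the proof of Lemma~\ref{lem;Pcanonemb} almost verbatim, exploiting the same geometric fact that the unit normal of the sphere moves tangentially. I regard $P^\perp$ as a self-adjoint, matrix-valued function on $S^{2n}$ and $n_x=x$ as the outward unit normal viewed as a $\CC^{2n+1}$-valued function. Since $n_x \in N_x \otimes \CC \subset \operatorname{Im}(P^\perp(x))$, the orthogonal projection fixes it, so $P^\perp(x)\, n_x = n_x$ holds identically in $x$. Differentiating this identity in a tangent direction $X \in T_x S^{2n}$ and using $d_X n_x = X$ (the differential of the inclusion $S^{2n} \hookrightarrow \R^{2n+1}$), the product rule gives
\begin{gather*}
(d_X P^\perp)(n_x) = d_X\big(P^\perp(n_x)\big) - P^\perp\big(d_X n_x\big) = (I - P^\perp)(X).
\end{gather*}
Thus it suffices to show that $(I - P^\perp)(X) \neq 0$ whenever $X \neq 0$, exactly as in the earlier lemma.

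This is the one place where the argument departs slightly from Lemma~\ref{lem;Pcanonemb}, and it is the step I would treat most carefully. Because $P^\perp$ is the \emph{orthogonal} projection onto $T^{0,1}_x \oplus (N_x \otimes \CC)$, its kernel is the orthogonal complement of that subspace rather than $T^{1,0}_x$; for non-orthogonal $J$ these differ, so I cannot identify $(I-P^\perp)X$ with $P^+X = \frac 1 2 (X - iJX)$ as in the earlier proof. Instead I would argue directly from the eigenvalue description of $T^{0,1}$. The condition $(I-P^\perp)(X)=0$ is equivalent to $X \in \operatorname{Im}(P^\perp) = T^{0,1}_x \oplus (N_x \otimes \CC)$. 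Since $X$ is a real tangent vector it has no component in the normal summand $N_x \otimes \CC$, which is orthogonal to $T_xM \otimes \CC$; hence membership in the image would force $X \in T^{0,1}_x$, i.e., $JX = -iX$. But $J$ is a real endomorphism, so $JX$ is real while $-iX$ is purely imaginary, and therefore $X = 0$.

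Combining the two steps, $(d_X P^\perp)(n_x) = (I-P^\perp)(X)$ is a nonzero element of $\CC^{2n+1}$ for every nonzero $X$, so $d_X P^\perp$ is a nonzero tangent vector to $\cG_{n+1,2n+1}$ at $P^\perp(x)$, which establishes that $P^\perp$ is an immersion. I expect no genuine obstacle here: the computation is structurally identical to Lemma~\ref{lem;Pcanonemb}, and the only subtlety is the observation just made, namely that orthogonality of $P^\perp$ prevents the clean formula $(I-P^\perp)X = \frac 1 2(X - iJX)$ and requires the short eigenspace argument in its place.
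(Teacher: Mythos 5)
Your proposal is correct and follows essentially the same route as the paper: the identity $\big(d_XP^\perp\big)(n_x) = \big(I-P^\perp\big)(X)$ obtained by differentiating $P^\perp(n_x)=n_x$, followed by the observation that a nonzero real tangent vector cannot lie in $\operatorname{Im}\big(P^\perp\big)$. Your explicit handling of the normal summand and the eigenvalue equation $JX=-iX$ simply spells out what the paper compresses into its final sentence (``$X \in T_xM$ is real, but $P^\perp(X) \in T^{0,1}$ is not real''), and your cautionary remark that $\big(I-P^\perp\big)X \neq P^+X$ for non-orthogonal $J$ is exactly the right distinction from Lemma~\ref{lem;Pcanonemb}.
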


\begin{proof}If $X \in T_x S^{2n}$ then $d_X P \in T_{P(x)} \cG_{n+1,2n+1}$ is a non-zero matrix since, for the real normal unit vector $n_x$ to $S^{2n}$ at $x$, we have
\begin{gather*}
\big(d_XP^\perp\big)(n_x) = d_X\big(P^\perp(n_x)\big) - P^\perp d_X(n_x) = \big(I-P^\perp \big) d_X(n_x) = \big(I-P^\perp\big)(X).
\end{gather*}
This shows $d_X P$ is non-zero whenever $X$ is non-zero since $X \in T_xM$ is real, but $P^\perp(X) \in T^{0,1}$ is not real.
\end{proof}

Recall that the tangent algebra $m$ associated to $J$ and $\nabla$ is given by
\begin{gather*}
m(X,Y) = \nabla_{JX} (J)Y - J \nabla_X (J) Y,
\end{gather*}
and that $J $ is strongly integrable with respect to $\nabla$ if and only if $\nabla_X J$ is $J$-linear in the variable~$X$, i.e., $m$ vanishes.

For a map $f\colon (M,J) \to (N,K) $ between almost complex manifolds, we define
\begin{gather*}
\overline{\partial}_X f = d_{JX} f - K \circ d_X f.
\end{gather*}

\begin{thm} \label{thm;delbarPperp}
For any $J$ on $S^{2n}$, the map $P^\perp\colon S^{2n} \to \cG_{n+1,2n+1}$ induced by $J$ satisfies
\begin{gather*}
\overline{\partial}_X P^\perp (n_x) = 0, \\
\overline{\partial}_X P^\perp (Y+iJY) = \big( I - P^\perp\big) (J m(X,Y)),
\end{gather*}
where $m(X,Y) = \nabla_{JX} (J)Y - J \nabla_X (J) Y$ is the tangent algebra.

Furthermore, $\overline{\partial} P^\perp \equiv 0$ if and only if~$J $ is strongly integrable with respect to~$\nabla$, i.e., the tangent algebra $m$ vanishes.
\end{thm}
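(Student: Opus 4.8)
The plan is to compute $\overline{\partial}_X P^\perp = d_{JX} P^\perp - K\circ d_X P^\perp$ directly, where $K$ denotes the complex structure on $\cG_{n+1,2n+1}$ introduced in Section~\ref{sec;grass}. The first observation I would make is structural: $\overline{\partial}_X P^\perp$ is itself a tangent vector to $\cG_{n+1,2n+1}$ at $P^\perp(x)$, since $d_X P^\perp$ and $d_{JX} P^\perp$ both lie in $T_{P^\perp}\cG_{n+1,2n+1}$ and $K$ preserves that tangent space. Hence $\overline{\partial}_X P^\perp$ is a \emph{self-adjoint} matrix of block form $\left[\begin{smallmatrix} 0 & B \\ B^* & 0\end{smallmatrix}\right]$ relative to $\CC^{2n+1} = \operatorname{Im}(P^\perp)\oplus\operatorname{Ker}(P^\perp)$. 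The payoff is that $\overline{\partial}_X P^\perp$ is completely determined by its action on $\operatorname{Im}(P^\perp) = T^{0,1}_x \oplus (N_x \otimes \CC)$, and this image is spanned by $n_x$ together with the vectors $Y+iJY$ for real $Y$. So it suffices to evaluate $\overline{\partial}_X P^\perp$ on exactly these two kinds of vectors, which is precisely the content of the two displayed formulas.

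Next I would record how $K$ acts on a tangent matrix $W = d_X P^\perp$ applied to an ambient vector. Reading off $K\!\left(\left[\begin{smallmatrix} 0 & A \\ A^* & 0\end{smallmatrix}\right]\right) = \left[\begin{smallmatrix} 0 & -iA \\ iA^* & 0\end{smallmatrix}\right]$ from Section~\ref{sec;grass}, one sees $(KW)v = i\,Wv$ for $v\in\operatorname{Im}(P^\perp)$ and $(KW)v = -i\,Wv$ for $v\in\operatorname{Ker}(P^\perp)$; since both $n_x$ and $Y+iJY$ lie in $\operatorname{Im}(P^\perp)$, the relevant scalar is always $i$. The two evaluations then proceed from the product rule $(d_X P^\perp)(V) = d_X(P^\perp V) - P^\perp d_X V$, the sphere identity~\eqref{eq;NablaSphere}, and the fact that $(I-P^\perp)$ restricted to $TM\otimes\CC$ is the projection $P^+$ onto $T^{1,0}$ (which also annihilates $n_x$). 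For the normal vector, $P^\perp n_x = n_x$ and $d_X n_x = X$ give $(d_X P^\perp)(n_x) = P^+X$, so the two $\overline{\partial}$-terms both equal $iP^+X$ and cancel. For $V = Y+iJY\in T^{0,1}$ one has $P^\perp V = V$, hence $(d_X P^\perp)(V) = P^+\nabla_X V$, and expanding $\nabla_X(Y+iJY)$ and using $P^+J = iP^+$ collapses the $T^{1,0}$-part to $iP^+((\nabla_X J)Y)$; assembling $d_{JX}$ against $-K d_X$ then rearranges the result, via $Jm(X,Y) = J(\nabla_{JX}J)Y + (\nabla_X J)Y$, into exactly $(I-P^\perp)(Jm(X,Y))$.

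For the final equivalence I would argue as follows. By self-adjointness, $\overline{\partial}_X P^\perp = 0$ if and only if it vanishes on $\operatorname{Im}(P^\perp)$, i.e.\ if and only if $(I-P^\perp)(Jm(X,Y)) = P^+(Jm(X,Y)) = 0$ for all real $X,Y$. Since $m(X,Y)$ is a real tangent vector, so is $Jm(X,Y)$, and $P^+(Jm(X,Y)) = 0$ forces $Jm(X,Y)\in T^{0,1}$; but a real vector lies in $T^{0,1}$ only if it is zero, so this is equivalent to $m(X,Y) = 0$. Thus $\overline{\partial} P^\perp \equiv 0$ if and only if $m\equiv 0$, which is strong integrability by Corollary~\ref{cor:Jintcomm}.

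The main obstacle I anticipate is not any single computation but keeping the two roles of the complex structure straight: $J$ acts on the sphere's tangent bundle, while $K$ acts on the tangent space of the Grassmannian, on the matrix $d_X P^\perp$. Correctly translating ``$K$ applied to $d_X P^\perp$'' into the scalar factors $\pm i$ on $\operatorname{Im}(P^\perp)$ versus $\operatorname{Ker}(P^\perp)$, and then threading the identity $P^+J = iP^+$ through the $T^{1,0}$-projections, is where an error would most easily creep in. Recognizing at the outset that $\overline{\partial}_X P^\perp$ is self-adjoint---so that only its restriction to $\operatorname{Im}(P^\perp)$ matters---is what makes the equivalence clean and avoids a separate computation on $T^{1,0}$.
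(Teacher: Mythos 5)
Your skeleton is the same as the paper's: evaluate $\overline{\partial}_X P^\perp$ only on $n_x$ and on vectors $Y+iJY \in T^{0,1}$, use self-adjointness of tangent vectors to $\cG_{n+1,2n+1}$ to dispose of the complementary block, and conclude the final equivalence from the fact that a real tangent vector lying in $\operatorname{Im}\big(P^\perp\big)$ must vanish. Your reading of the Grassmannian complex structure (scalar $i$ on vectors in $\operatorname{Im}\big(P^\perp\big)$) is also correct. However, there is a genuine error in the computational core: the claim that $\big(I-P^\perp\big)$ restricted to $TM\otimes\CC$ is the projection $P^+$ onto $T^{1,0}$. By definition $P^\perp$ is the \emph{self-adjoint} projection onto $T^{0,1}\oplus(N_x\otimes\CC)$, so $\big(I-P^\perp\big)\big|_{TM\otimes\CC} = I-Q$, where $Q$ is the Hermitian-orthogonal projection onto $T^{0,1}$; its image is the orthogonal complement of $T^{0,1}$ inside $T_xM\otimes\CC$, which equals $T^{1,0}$ if and only if $T^{1,0}\perp T^{0,1}$, i.e., if and only if $J$ is orthogonal (see the Remark in Section~\ref{sec;Pmap} and Example~\ref{ex;MonehalfJ}). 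The operator whose tangential restriction is $P^-$, with kernel $T^{1,0}$, is the \emph{canonical map} $P$ of Section~\ref{sec;Pmap}, valued in the non-compact Grassmannian $\cI_{n+1,2n+1}$ --- not the map $P^\perp$ into $\cG_{n+1,2n+1}$ treated here. Since the whole interest of this theorem is the non-orthogonal case (the orthogonal case reduces to LeBrun), this identification cannot be used.

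Concretely: $\big(d_XP^\perp\big)(n_x) = (I-Q)X$, not $P^+X$, and, carried out consistently, your second computation yields $\overline{\partial}_X P^\perp(Y+iJY) = P^+(Jm(X,Y)) \in T^{1,0}$ rather than $\big(I-P^\perp\big)(Jm(X,Y)) \in \operatorname{Ker}\big(P^\perp\big)$; the two differ by the generally nonzero vector $(P^- - Q)(Jm(X,Y)) \in T^{0,1}$, and the former even contradicts your own (correct) structural observation that $\overline{\partial}_XP^\perp$ must carry $\operatorname{Im}\big(P^\perp\big)$ into $\operatorname{Ker}\big(P^\perp\big)$. Your final conclusions survive only because every discrepancy lies in $T^{0,1}$, which both $P^+$ and $I-P^\perp$ annihilate; the displayed formula itself is not established by your argument. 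The repair is the paper's device: never introduce $P^+$, and instead exploit that $I-P^\perp$ kills $T^{0,1}$ by adjusting its argument by elements of $T^{0,1}$. Writing $\nabla_{X+iJX}(Y+iJY) = A+iB$ with $A = \nabla_XY-\nabla_{JX}(JY)$ and $B = \nabla_X(JY)+\nabla_{JX}Y$, one has $(A+iB)-i(B-JA) = A+iJA \in T^{0,1}$, hence
\begin{gather*}
\big(I-P^\perp\big)(A+iB) = i\big(I-P^\perp\big)(B-JA) = i\big(I-P^\perp\big)(Jm(X,Y)),
\end{gather*}
and multiplying by $-i$ (coming from $\overline{\partial}_X = -i\, d_{X+iJX}$ on $\operatorname{Im}\big(P^\perp\big)$) gives exactly the stated formula. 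With the two identities established this way, your concluding equivalence argument is correct and matches the paper's.
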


\begin{proof}Recall from Section \ref{sec;grass} that the complex structure on $T_P \cG_{n+1,2n+1}$ is given by multiplication by $+i$ on $\operatorname{Hom}\big(\operatorname{Im}\big(P^\perp\big), \operatorname{Ker}\big(P^\perp\big)\big)$, and by self-adjointness, is given by multiplication by $-i$ on $\operatorname{Hom}\big(\operatorname{Ker}\big(P^\perp\big),\operatorname{Im}\big(P^\perp\big)\big)$.

As in the previous lemma we have
\begin{gather*}
 \overline{\partial}_X P^\perp (n_x) = -i \big( d_{X+iJX} P^\perp \big) (n_x) = -i \big(I-P^\perp\big)(X+iJX) = 0,
\end{gather*}
and, for arbitrary $Z \in T^{0,1}_x$, we have
\begin{gather*}
\big( d_{X+iJX} P^\perp \big) (Z) = \big(I-P^\perp\big) d_{X+iJX} (Z)= \big(I-P^\perp\big) ( \nabla_{X+iJX} (Z) - \langle X+iJX, Z \rangle n_x ) \\
\hphantom{\big( d_{X+iJX} P^\perp \big) (Z)}{} = \big(I-P^\perp\big) ( \nabla_{X+iJX} (Z) ),
 \end{gather*}
 since $P^\perp(n_x) = n_x$. Now, if $Z= Y+iJY$, then the last expression is equal to
 \begin{gather*}
 \big(I -P^\perp\big) \big( \nabla_X Y - \nabla_{JX} (JY) + i ( \nabla_X(JY) + \nabla_{JX} Y ) \big) \\
 \qquad{} = \big(I -P^\perp\big) i \big( \nabla_X(JY) + \nabla_{JX} Y - J(\nabla_X Y - \nabla_{JX} (JY)) \big) \\
 \qquad {}= i \big(I -P^\perp\big) \big( J ( \nabla_{JX} (J) Y - J\nabla_X(J) Y ) \big) = i \big(I - P^\perp\big) ( J m(X,Y) ).
\end{gather*}
 So,
\begin{gather*}
\overline{\partial}_X P^\perp (Y+iJY) = -i \big( d_{X+iJX} P^\perp \big) (Y + iJY) = ( I - P^\perp) ( J m(X,Y) ),
\end{gather*}
as claimed. Since $ J m(X,Y)$ is real, the right hand side vanishes if and only if~$m$ vanishes, i.e., $J $~is strongly integrable with respect to $\nabla$.

The remainder of the proof, showing that $\overline{\partial} P^\perp \equiv 0$ when $m$ vanishes, is entirely formal since the restriction $dP^\perp\colon \big(T^{0,1}\big)^\perp \to T^{0,1}$ is the adjoint of the restriction $dP^\perp\colon T^{0,1} \to \big(T^{0,1}\big)^\perp$. Explicitly, for any $W + i JZ = \big(T^{0,1}\big)^\perp$ we have with respect to the complex inner product $\langle - , - \rangle$,
\begin{gather*}
\big\langle \big( d_{X-iJX} P^\perp \big) (W+iJZ) , Y+i JY \big\rangle
= \big\langle W+iJZ , \big( d_{X+iJX} P^\perp \big)( Y+i JY )\big\rangle = 0
\end{gather*}
since $P^\perp$ is always self adjoint. Also, for $U + iV \in \big(T^{0,1}\big)^\perp$, we have
\begin{gather*}
\big\langle \big( d_{X-iJX} P^\perp \big) (W+iJZ) , U+iJV \big\rangle = 0,
\end{gather*}
since $d_{X-iJX} P^\perp \colon \big(T^{0,1}\big)^\perp \to T^{0,1}$, because $P^\perp$ is a projection operator.
\end{proof}

Note that by Theorem~\ref{thm;Jintorth}, any integrable orthogonal $J$ on~$S^6$ is strongly integrable with respect to $\nabla$, whereas locally there are non-orthogonal almost complex structures on~$S^6$ which are strongly integrable with respect to a torsion free connection. For example, from Euclidean space we have those non-orthogonal almost complex structures which are covariantly constant. Nevertheless, we can conclude:

\begin{cor} \label{cor;noStrongJ}There are no almost complex structures on $S^6$ which are strongly integrable with respect to the Levi-Civita connection~$\nabla$ of the standard metric.
\end{cor}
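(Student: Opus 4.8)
The plan is to reduce the statement to the topological fact that $H^2\big(S^6\big)=0$ by manufacturing, out of a hypothetical strongly integrable $J$, a symplectic form on $S^6$. The engine is Theorem~\ref{thm;delbarPperp}. Here $n=3$, so the target of the canonical map is $\cG_{n+1,2n+1}=\cG_{4,7}$, a compact K\"ahler manifold whose complex structure I will denote by $K$ (to distinguish it from $J$ on $S^6$). If $J$ is strongly integrable with respect to $\nabla$, then the tangent algebra $m$ vanishes, and Theorem~\ref{thm;delbarPperp} gives $\overline{\partial}P^\perp\equiv 0$; equivalently $d_{JX}P^\perp=K\,d_XP^\perp$, i.e., $P^\perp\colon S^6\to\cG_{4,7}$ is $(J,K)$-holomorphic. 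By the immersion lemma preceding Theorem~\ref{thm;delbarPperp}, $P^\perp$ is moreover an immersion, so $d_XP^\perp\neq 0$ whenever $X\neq 0$.

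Next I would pull back the K\"ahler form. Recall from Section~\ref{sec;grass} that the restriction of $\omega=\frac{1}{2i}\Tr\big(PdP^2\big)$ to $\cG_{4,7}$ is a real closed $2$-form taming $K$, in the strong sense that $\omega(V,KV)>0$ for every nonzero tangent vector $V$ (this was the content of the computation $\omega(X,KX)=\operatorname{Re}\Tr(BB^*)>0$). Since $P^\perp$ is holomorphic I may substitute $d_{JX}P^\perp=K\,d_XP^\perp$, so that for every $X\neq 0$
\begin{gather*}
\big(P^\perp\big)^*\omega\,(X,JX)=\omega\big(d_XP^\perp,\,d_{JX}P^\perp\big)=\omega\big(d_XP^\perp,\,K\,d_XP^\perp\big)>0,
\end{gather*}
the final inequality using that $P^\perp$ is an immersion. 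Thus $\big(P^\perp\big)^*\omega$ is a closed $2$-form on $S^6$ tamed by $J$, hence non-degenerate, i.e., a symplectic form.

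Finally I would invoke the topological obstruction: on a closed $6$-manifold any symplectic form $\sigma$ satisfies $[\sigma]^3=[\sigma^3]\neq 0$ in $H^6$, since $\sigma^3$ is a volume form, and this forces $[\sigma]\neq 0$ in $H^2$. But $H^2\big(S^6\big)=0$, a contradiction, so no strongly integrable $J$ exists. I note this is the same mechanism as in Corollary~\ref{cor;commute}, now applied to a genuinely non-degenerate pulled-back form rather than only the single pairing $(X,JX)$.

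I expect the only delicate point—everything else being a direct appeal to the already-proved Theorem~\ref{thm;delbarPperp} and the immersion lemma—to be the verification that $\big(P^\perp\big)^*\omega$ is actually \emph{non-degenerate} and not merely closed. This hinges on combining three ingredients simultaneously: the holomorphicity $\overline{\partial}P^\perp\equiv 0$ (used to rewrite $d_{JX}P^\perp$ as $K\,d_XP^\perp$), the taming positivity $\omega(V,KV)>0$ of the K\"ahler form on $\cG_{4,7}$, and the immersion property guaranteeing $d_XP^\perp\neq 0$. If any one of these failed the argument would collapse, which is precisely why the normal bundle was incorporated into $P^\perp$ (making it land in $\cG_{4,7}$ with $K$ genuinely tamed) and why the immersion lemma was isolated beforehand.
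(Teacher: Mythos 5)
Your proposal is correct and is essentially the paper's own proof: the paper likewise combines Theorem~\ref{thm;delbarPperp} (holomorphicity of $P^\perp$ when $m\equiv 0$) with the immersion lemma and the K\"ahler taming on $\cG_{4,7}$ to get a closed non-degenerate $2$-form on $S^6$, contradicting $H^2\big(S^6\big)=0$. You merely spell out the non-degeneracy computation and the Stokes-type cohomological obstruction that the paper leaves implicit.
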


\begin{proof} By the Theorem~\ref{thm;delbarPperp}, if $J$ is strongly integrable with respect to $\nabla$, then $P^\perp\colon S^6 \to \cG_{4,7}$ is a $J$-holomorphic immersion. Since $\cG_{4,7}$ is a K\"ahler manifold, this shows the pullback $\big(P^\perp \big)^*\omega$ is closed and non-degenerate, which contradicts $H^2\big(S^6\big) =0$.
\end{proof}

Note that in Theorem \ref{thm;delbarPperp} and Corollary~\ref{cor;noStrongJ} we did not use the explicit formula for the K\"ahler form $\omega$, but rather only the existence of such a structure compatible with the complex structure on~$\cG_{k,n}$. While this was sufficient to conclude the pullback $\big(P^\perp\big)^*\omega$ is non-degenerate, it is far from necessary. We next give a necessary and sufficient condition for $\big(P^\perp\big)^*\omega$ to be non-degenerate, which is intrinsic in that it depends only on~$J$, the metric and the covariant derivatives of~$J$.

To do so, we calculate the pullback form $\big(P^\perp\big)^*\omega$ explicitly, but first we need a lemma. This lemma should be thought of as the non-orthogonal analogue of the standard way in which any orthogonal $J$ determines a compatible $2$-form tamed by~$J$.

\begin{lem} \label{lem;JRM} Let $V$ be a real vector space with complex structure $J$ and real inner product $\langle - ,- \rangle$. Extend $\langle - ,- \rangle$ to a complex inner product on $W = V \otimes \CC \cong V \oplus i V$ and let $Q\colon W \to W$ be orthogonal projection onto $T^{0,1}(V,J)$, i.e., the $-i$ eigenspace of $J$. Then $Q = R + i M$ for unique real linear operators~$R$ and~$M$ on~$V$, and we have
\begin{itemize}\itemsep=0pt
\item[$(1)$] $R-MJ= \operatorname{Id}$;
\item[$(2)$] $R^* = R$ and $M^* = -M$;
\item[$(3)$] $\Tr(Q) = \dim(V)/2$;
\item[$(4)$] $R^2-M^2 = R$ and $RM+MR= M$.
\end{itemize}
Conversely, any real operators $R$ and $M$ on~$V$ satisfying the above properties determine the operator $Q=R+iM$ to be the orthogonal projection onto $T^{0,1}$, and satisfy
\begin{itemize}\itemsep=0pt
\item[$(a)$] $MJ= - J^*M$;
\item[$(b)$] $ M^2J-J^*M^2 = -M$;
\item[$(c)$] $\nu(X,Y) := \langle M X,Y\rangle$ is a $2$-form, tamed by and compatible with $J$, i.e., there is an associated positive definite inner product
\begin{gather*}
(X,Y) := \nu(X,JY),
\end{gather*}
and $J$ is orthogonal with respect to $\nu$ and $( \, , \, )$.
\end{itemize}
\end{lem}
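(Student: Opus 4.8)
The plan is to obtain properties $(1)$--$(4)$ by expanding the three defining relations of an orthogonal projection onto $T^{0,1}$ into real and imaginary parts, and then to deduce $(a)$--$(c)$ as purely algebraic consequences of $(1)$--$(4)$. Throughout I use that $T^{0,1} = \{X + iJX : X \in V\}$ and $T^{1,0} = \{X - iJX : X \in V\}$, that the complex (Hermitian) inner product restricts to $\langle - , - \rangle$ on real vectors, and that for a real operator $A$ on $V$ extended $\CC$-linearly to $W$ the Hermitian adjoint is the $\CC$-linear extension of the real adjoint $A^*$. The splitting $Q = R + iM$ with $R$, $M$ real and unique comes from taking real and imaginary parts of $QX$ for $X \in V$ and extending $\CC$-linearly.

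For the forward direction I would match each defining relation of $Q$ to one property. Self-adjointness $Q^* = Q$ reads $R^* - iM^* = R + iM$, giving $(2)$. Idempotence $Q^2 = Q$ expands to $R^2 - M^2 + i(RM + MR) = R + iM$, whose real and imaginary parts are exactly the two identities in $(4)$. That $Q$ fixes $T^{0,1}$, applied to $X + iJX$, gives $(RX - MJX) + i(RJX + MX) = X + iJX$; the real part is $(1)$, while the imaginary part $RJ + M = J$ follows automatically from $(1)$ using $J^2 = -\operatorname{Id}$. Finally $(3)$ is the statement that a self-adjoint idempotent has trace equal to $\dim_\CC \operatorname{Im}(Q) = \dim_\CC T^{0,1} = \dim(V)/2$; equivalently $\Tr(Q) = \Tr(R)$ since $\Tr(M) = 0$ by skewness.

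The converse runs these equivalences backwards: $(2)$ gives $Q^* = Q$, $(4)$ gives $Q^2 = Q$, so $Q$ is an orthogonal projection; $(1)$ shows $Q$ fixes $T^{0,1}$, hence $T^{0,1} \subseteq \operatorname{Im}(Q)$, and $(3)$ forces the dimensions to agree, so $\operatorname{Im}(Q) = T^{0,1}$. For the consequences, $(a)$ falls out of self-adjointness of $R = \operatorname{Id} + MJ$: since $(MJ)^* = -J^* M$, the relation $R^* = R$ is precisely $MJ = -J^* M$. For $(c)$, antisymmetry of $\nu$ is immediate from $M^* = -M$; to see that $(X,Y) = \langle MX, JY\rangle$ is symmetric and positive definite I would rewrite it as $\langle(\operatorname{Id} - R)X, Y\rangle$ using $MJ = R - \operatorname{Id}$ and $M^* = -M$, observe that $\operatorname{Id} - R$ is self-adjoint (hence the form is symmetric), and identify $\langle(\operatorname{Id} - R)X, X\rangle$ with the real Hermitian quantity $\langle(\operatorname{Id} - Q)X, X\rangle$, the imaginary part vanishing because $\langle MX, X\rangle = 0$; since $\operatorname{Id} - Q$ is a positive semidefinite projection with kernel $T^{0,1}$ and no nonzero real vector lies in $T^{0,1}$, this is strictly positive, giving positive-definiteness and hence taming. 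The $J$-orthogonality of $\nu$ (and of $(\,,\,)$) then drops out of $(a)$, e.g.\ $\nu(JX,JY) = \langle MJX, JY\rangle = -\langle J^* MX, JY\rangle = -\langle MX, J^2 Y\rangle = \nu(X,Y)$.

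The main grind will be $(b)$, which I would derive from $(1)$, $(2)$, $(4)$ and the already-proved $(a)$ by substituting $M^2 = R^2 - R$ and reducing to the operators $R$, $J$, $J^*$, $M$ via the identities $RJ = J - M$ and $J^* R = J^* + M$ (both consequences of $(1)$ and $(a)$); expanding $M^2 J - J^* M^2 = (R^2 J - J^* R^2) - (RJ - J^* R)$ and collapsing the commutator term through $RM + MR = M$ returns $-M$. The only genuinely subtle point, as opposed to bookkeeping, is the strict positivity in $(c)$: this is where the \emph{orthogonal} nature of $Q$ enters, rather than the algebraic projection $P^- = \frac{1}{2}(\operatorname{Id} + iJ)$, through the positivity of $\operatorname{Id} - Q$ together with $V \cap T^{0,1} = 0$.
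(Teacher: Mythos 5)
Your proposal is correct and takes essentially the same route as the paper: split $Q = R + iM$, match self-adjointness, idempotence, the fixing of $T^{0,1}$, and the rank count to properties $(2)$, $(4)$, $(1)$, $(3)$ respectively, prove the converse by uniqueness of the self-adjoint idempotent with image $T^{0,1}$, and derive $(a)$--$(c)$ algebraically (your detour through $R^2J - J^*R^2$ for $(b)$ is just a longer version of the paper's one-line computation $M(MJ)+(MJ)M = MR+RM-2M=-M$). One point in your favor: the paper only exhibits $(X,X) = \langle (\operatorname{Id}-R)X, X\rangle \geq 0$, while you correctly pin down strict positivity via $\ker(\operatorname{Id}-Q) = T^{0,1}$ and $V \cap T^{0,1} = 0$, which is needed for the claimed positive definiteness and taming.
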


\begin{proof} Let $R = \operatorname{Re}(Q)$ and $M = \operatorname{Im}(Q)$. The first condition follows since~$Q$ is the identity on~$T^{0,1}$
\begin{gather*}
(R+iM)(Z+iJZ) = (R-MJ)Z + i(RJ+MZ) = Z+iJZ.
\end{gather*}
The second condition follows since $Q$ is self adjoint, the third condition since $Q$ is a projection of rank $\dim(V)/2$, and the fourth since $Q^2 = Q$. The converse holds since orthogonal projection onto $T^{0,1}$ is the unique self adjoint projection of rank $\dim(V)/2$ and image $T^{0,1}$. Condition~(a) follows from~$(1)$ and~$(2)$ since $R-\operatorname{Id}$ is self adjoint, and $M$ is skew-adjoint. Next, condition $(b)$ is given by
\begin{gather*}
\big(M^2J-J^*M^2) = MMJ+MJM = M(R-I)+(R-I)M = MR+RM-2M = -M.
\end{gather*}
The inequality
\begin{gather*}
0 \leq \|X\|^2 - \|Q X\|^2 = \langle (I-Q)X, X \rangle = \langle (I-R)X, X \rangle + i \langle MX , X\rangle
\end{gather*}
shows $\langle MX , X\rangle =0$ and
\begin{gather*}
 (X,X) = \langle -MJ X, X \rangle = \langle (I-R)X, X \rangle \geq 0.
\end{gather*}
Finally, $\eta$ is alternating by $(2)$, and the rest follows
\begin{gather*}
(X,Y) = \langle M X, JY\rangle = \langle X, J^*M Y\rangle = \langle JX, MY\rangle = (Y,X),
\end{gather*}
and
\begin{gather*}
 \nu(JX,JY) = \langle M JX, JY\rangle = \langle - J^* M X, JY\rangle = \nu(X,Y) ,
\end{gather*}
and
\begin{gather*}
(JX,JY) = -\nu(JX,Y)= \nu(Y,JX)= (Y,X) = (X,Y).\tag*{\qed}
 \end{gather*}
 \renewcommand{\qed}{}
\end{proof}

\begin{ex} \label{ex;MonehalfJ} If $J$ is orthogonal on $V$ with respect to a given inner product, then $R= \frac 1 2 \operatorname{Id} $ and $M = \frac 1 2 J$. The last condition in the lemma is then a multiple of the usual assignment $\omega(X,Y) := (J X,Y)$.
 \end{ex}

We can apply the preceding algebraic lemma at every tangent space to give an explicit description of the pullback form $\big(P^\perp\big)^*\omega$ along the map $P^\perp\colon S^{2n} \to \cG_{n+1,2n+1}$.

\begin{prop} \label{prop;pull}The pullback of the K\"ahler form $\omega$ on $ \cG_{n+1,2n+1}$ along $P^\perp$ is given by
\begin{gather} \label{eq;pullomega}
\big(P^\perp\big)^*\omega(X,Y) = \sum_k \nu \big( (\nabla_X J) Z_k, (\nabla_Y J) Z_k \big) + \nu (X , Y),
\end{gather}
where the sum is over any $Z_k$ such that $\{Z_k + i J Z_k\}$ is an orthonormal basis of $T^{0,1}$, and $\nu(\alpha ,\beta) = \langle M \alpha, \beta \rangle$ is the associated $2$-form from Lemma~{\rm \ref{lem;JRM}}.
\end{prop}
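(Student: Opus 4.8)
The plan is to compute the pullback $\big(P^\perp\big)^*\omega(X,Y) = \omega\big(d_X P^\perp, d_Y P^\perp\big)$ directly from the explicit K\"ahler-form formula established in Section~\ref{sec;grass}, namely $\omega(X,Y) = \operatorname{Im}\Tr(BD^*)$ for tangent vectors written in block form with off-diagonal blocks $B$ and $D$. So the first step is to identify the relevant blocks of $d_X P^\perp$ and $d_Y P^\perp$. Since $P^\perp$ lands in the self-adjoint Grassmannian $\cG_{n+1,2n+1}$, the tangent vector $d_X P^\perp$ is determined by its single block $\operatorname{Im}\big(P^\perp\big) = T^{0,1}\oplus(N\otimes\CC) \to T^{1,0}$; call this block $B$ and the corresponding block of $d_Y P^\perp$ call $D$. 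The block-structure computation is exactly the one already carried out (in the non-self-adjoint setting) in the proof of Theorem~\ref{thm;Pcanon}, so I would reuse those formulas: on the $T^{0,1}$ summand, the relevant block of $d_X P^\perp$ restricted to $Z+iJZ$ is $\tfrac{i}{2}(\nabla_X J)(Z+iJZ)$, while on the normal vector $n_x$ it is $\tfrac12(X - iJX)$, and similarly for $Y$.

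Next I would assemble $\operatorname{Im}\Tr(BD^*)$ by summing over the orthonormal basis $\{Z_k + iJZ_k\}$ of $T^{0,1}$ together with $n_x$. The normal contribution should produce the term $\nu(X,Y)$: using $B(n_x)=\tfrac12(X-iJX)$ and $D(n_x)=\tfrac12(Y-iJY)$ and pairing against the $T^{1,0}$ image, the imaginary part of the trace yields (after using $M = \operatorname{Im}(Q)$ and property (c) of Lemma~\ref{lem;JRM}) exactly $\nu(X,Y) = \langle MX, Y\rangle$. The tangential contributions, coming from the $\tfrac{i}{2}(\nabla_{\cdot}J)$ blocks, should assemble into $\sum_k \nu\big((\nabla_X J)Z_k, (\nabla_Y J)Z_k\big)$. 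The conceptual point is that the adjoint $D^*$ reintroduces the orthogonal projection $Q = R + iM$ onto $T^{0,1}$ because the inner product on $\CC^{2n+1}$ is the Hermitian one, and it is precisely the operator $M$ that records the failure of $J$ to be orthogonal; extracting the imaginary part of the Hermitian pairing is what converts $\tfrac{i}{2}(\nabla_X J)Z_k$ paired with $\tfrac{i}{2}(\nabla_Y J)Z_k$ into the real bilinear form $\nu$.

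The main obstacle I anticipate is bookkeeping the difference between the Hermitian inner product $\langle\langle-,-\rangle\rangle$ on $\CC^{2n+1}$ (conjugate-linear in the second slot) and the $\CC$-bilinear extension $\langle-,-\rangle$ of the real metric, together with keeping careful track of which projections are applied where. In particular, when forming $BD^*$ I must be precise that $D^*$ is the Hermitian adjoint, so that pairing a vector of $\operatorname{Im}(P^\perp)$ against the image requires reexpressing things in terms of the real operators $R$ and $M$ from Lemma~\ref{lem;JRM}; the factors of $i$ and the orthonormality of $\{Z_k + iJZ_k\}$ over $\CC$ (which carries a factor of $2$ relative to the underlying real vectors $Z_k$) must be reconciled to land on the stated coefficients. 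The cross terms mixing the normal and tangential blocks must be checked to cancel or be absent, since $B$ and $D$ send $T^{0,1}$ and $n_x$ into the same target $T^{1,0}$; I expect the orthogonality of $n_x$ to the tangential data, after projection, to kill these, but this is the place where an error would most easily creep in. Once these identifications are made, applying Lemma~\ref{lem;JRM}(c) to recognize $\langle M\alpha,\beta\rangle = \nu(\alpha,\beta)$ finishes the computation and yields equation~\eqref{eq;pullomega}.
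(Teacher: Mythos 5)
There is a genuine gap, and it occurs at the very first substantive step: you import the block formulas from the proof of Theorem~\ref{thm;Pcanon}, but those formulas belong to a \emph{different map}. The canonical map $P$ of Section~\ref{sec;Pmap} restricts on $TM \otimes \CC$ to the non-self-adjoint idempotent $P^- = \frac12(I + iJ)$, and it is for that map that one has $d_X P\big|_{T^{0,1}} = \frac{i}{2}(\nabla_X J)$ and $d_XP(n_x) = \frac12(X - iJX)$. The map $P^\perp$ in Proposition~\ref{prop;pull} instead restricts on $TM \otimes \CC$ to the \emph{orthogonal} projection $Q = R + iM$ of Lemma~\ref{lem;JRM}, and $Q = P^-$ only when $J$ is orthogonal (Example~\ref{ex;MonehalfJ}: $R = \frac12\operatorname{Id}$, $M = \frac12 J$). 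For a general $J$ the two maps, hence their derivatives, genuinely differ. The correct blocks are
\begin{gather*}
d_X P^\perp (Z + iJZ) = M (\nabla_X J)(Z + iJZ), \qquad d_X P^\perp(n_x) = (I - R)X - iMX,
\end{gather*}
where the first identity is obtained by differentiating $R - MJ = \operatorname{Id}$ to get $\nabla_X R = (\nabla_X M)J + M\nabla_X J$; this derivation is precisely the first half of the paper's proof and is where the operator $M$ (hence the form $\nu$) enters. With your imported blocks you would instead be computing the pullback of $\operatorname{Re}(\omega)$ along the canonical map $P$, i.e., you would land on the commutator-trace formula of Theorem~\ref{thm;Pcanon} with its $\langle X, JY\rangle$ terms, not on equation~\eqref{eq;pullomega}.

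Your attempted repair --- that forming the Hermitian adjoint $D^*$ ``reintroduces'' $Q = R + iM$ --- cannot work: the standard Hermitian inner product on $\CC^{2n+1}$ is fixed and independent of $J$, so no amount of taking adjoints or imaginary parts of pairings of the blocks $\frac{i}{2}\nabla J$ and $\frac12(X - iJX)$ can produce the operator $M$, which depends on $J$ through the orthogonal decomposition $T^{0,1} \oplus \big(T^{0,1}\big)^\perp$. The rest of your skeleton does match the paper's: one writes $\big(P^\perp\big)^*\omega(X,Y) = \operatorname{Im}\Tr(BD^*)$, sums over an orthonormal basis $\{Z_k + iJZ_k\} \cup \{n_x\}$ of $\operatorname{Im}\big(P^\perp\big)$, finds that the normal term yields $\nu(X,Y)$ and the tangential terms yield $\sum_k \nu\big((\nabla_X J)Z_k, (\nabla_Y J)Z_k\big)$ after applying the identities $MJ = -J^*M$ and $M^2 J - J^* M^2 = -M$ of Lemma~\ref{lem;JRM}. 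So the fix is local but essential: replace the borrowed blocks by the computation of $dP^\perp$ in terms of $R$ and $M$ before assembling the trace.
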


\begin{proof}We first show that
\begin{gather*}
d_X P^\perp\big|_{T^{0,1}} = M \nabla_X J.
\end{gather*}
Letting $Z + iJZ \in T^{0,1}$ be arbitrary, we calculate
\begin{gather*}
d_X P^\perp (Z + i JZ) = \big(I-P^\perp\big) d_X (Z + iJZ)= \big(I-P^\perp\big) \big( \nabla_X(Z+iJZ) -\langle X,Z + i JZ\rangle n_x \big) \\
\hphantom{d_X P^\perp (Z + i JZ)}{} = \big(I-P^\perp\big) (\nabla_X(Z+iJZ)).
\end{gather*}
By definition $P^\perp = Q \stackrel{{\perp}}{\oplus} \pi_{N}$ where $Q$ is the orthogonal projection of~$TS^{2n} \otimes \CC$ onto $T^{0,1}$ and~$\pi_N$ is orthogonal projection onto the normal bundle, so
\begin{gather*}
d_X P^\perp (Z + i JZ) =(I- Q \oplus \pi_{N}) ( \nabla_X(Z+iJZ) ) \\
\hphantom{d_X P^\perp (Z + i JZ)}{} = (I-Q) ( \nabla_X(Z+iJZ) ) = (\nabla_X Q) (Z+iJZ).
\end{gather*}
Now, letting $Q= R + iM$, as in the previous lemma,
\begin{gather*}
(\nabla_X R + i \nabla_X M) (Z+iJZ) = ( \nabla_X R ) Z - (\nabla_X M) (JZ)
+ i \big( (\nabla_X M) (Z) + (\nabla_X R) (JZ) \big).
\end{gather*}
By the lemma, $R-MJ= \operatorname{Id}$ so that
\begin{gather*}
\nabla_X R - (\nabla_X M) J - M (\nabla_X J)= 0,
\end{gather*}
so we conclude
\begin{gather*}
d_X P^\perp (Z + i JZ) = M \nabla_X (J) Z + i M \nabla_X(J) (J Z) = M \nabla_X(J) (Z+iJZ).
\end{gather*}

Recall that the K\"ahler form $\omega(X,Y)$ on $\cG_{k,n}$ is given by $\operatorname{Im} \Tr(BD^*)$, so that the pull\-back~$\big(P^\perp\big)^*\omega$ is given by
\begin{gather*}
\big(P^\perp\big)^*\omega(X,Y)= \operatorname{Im} \Tr \big( d_X P^\perp\big|_{(T^{0,1} \oplus N)^\perp} \circ d_Y P^\perp \big|_{T^{0,1}\oplus N} \big),
\end{gather*}
where we have made the substitutions $B= d_X P^\perp\big|_{(T^{0,1}\oplus N)^\perp}\colon \operatorname{Ker}\big(P^\perp\big) \to \operatorname{Im}\big(P^\perp\big)$ and $D^* = d_Y P^\perp\big|_{T^{0,1} \oplus N} \colon \operatorname{Im}\big(P^\perp\big) \to \operatorname{Ker}\big(P^\perp\big)$.

For any orthonormal basis $\{Z_k + i J Z_k\} \cup \{n\}$ of $T^{0,1} \oplus N$ with respect to the complex inner product $\langle - ,- \rangle$, which is conjugate linear in the second variable, we use the fact that the adjoint of $B$ is $B^* = d_X P^\perp\big|_{T^{0,1} \oplus N} \colon \operatorname{Im}\big(P^\perp\big) \to \operatorname{Ker}\big(P^\perp\big)$, and have
\begin{gather*}
\big(P^\perp\big)^*\omega(X,Y) = \operatorname{Im} \big\langle \big( d_Y P^\perp\big|_{T^{0,1}\oplus N} \big) n, \big( d_X P^\perp\big|_{T^{0,1} \oplus N} \big) n \big\rangle\\
\hphantom{\big(P^\perp\big)^*\omega(X,Y) =}{} + \sum _k \operatorname{Im} \big\langle \big( d_Y P^\perp\big|_{T^{0,1} \oplus N} \big) (Z_k + i J Z_k), \big( d_X P^\perp\big|_{T^{0,1}\oplus N} \big) (Z_k + i J Z_k) \big\rangle.
\end{gather*}
For the first summand on the right hand side we use
\begin{gather*}
\big( d_X P^\perp\big|_{T^{0,1} \oplus N } \big) n = \big(I-P^\perp\big) \nabla_X n = \big(I-P^\perp\big)(X) = X - RX - iMX,
\end{gather*}
and Lemma \ref{lem;JRM} to obtain
\begin{gather*}
\operatorname{Im} \big\langle \big( d_Y P^\perp\big|_{T^{0,1}\oplus N} \big) n, \big( d_X P^\perp\big|_{T^{0,1}\oplus N} \big) n \big\rangle
 = \langle Y - RY - iMY, X - RX - iMX \rangle \\
 \qquad{} = \langle (I-R)Y ,MX \rangle - \langle MY, (I-R)X\rangle = \langle MX, Y \rangle.
\end{gather*}

For the latter summation term we have
\begin{gather*}
\operatorname{Im} \big\langle \big( d_Y P^\perp\big|_{T^{0,1}} \big) (Z_k + i J Z_k), \big( d_X P^\perp\big|_{T^{0,1}} \big) (Z_k + i J Z_k) \big\rangle\\
\qquad{} =\langle ( M \nabla_Y J ) (J Z_k), ( M \nabla_X J ) (Z_k ) \rangle
-\langle ( M \nabla_Y J ) (Z_k ), ( M \nabla_X J ) ( J Z_k) \rangle \\
\qquad{} =\big\langle \big( M^2J \nabla_Y J\big) ( Z_k), (\nabla_X J) (Z_k ) \big\rangle-
\langle ( M \nabla_Y J ) (Z_k ), -MJ (\nabla_X J) ( Z_k) \rangle \\
\qquad{} = \big\langle \big(M^2J-J^*M^2\big) ( \nabla_Y J ) ( Z_k), (\nabla_X J) (Z_k ) \big\rangle
=\langle M (\nabla_X J) ( Z_k), (\nabla_Y J) (Z_k ) \rangle ,
\end{gather*}
where in the last step we have we have used Lemma~\ref{lem;JRM}.
\end{proof}

\begin{rmk}Proposition \ref{prop;pull} also verifies (in another way) Corollary~\ref{cor;noStrongJ}, for if $\nabla_{JX} J = J \nabla_X J$, then by Lemma~\ref{lem;JRM} part $(c)$ we have
\begin{gather*}
\big(P^\perp\big)^*\omega(X,JX) = \sum_k \| (\nabla_X J) Z_k \|^2 + \|X\|^2 >0,
\end{gather*}
where $\|\cdot \|$is the norm induced by the inner product $( \alpha ,\beta ) := \nu (\alpha , J \beta)$. In the case that $J$ is orthogonal, this agrees with the proof in Corollary~\ref{cor;L}, by Example~\ref{ex;MonehalfJ}.
\end{rmk}

For the remainder of this section, consider the $2$-form given by
\begin{gather*}
\eta(X,Y) = \sum_k \nu \big( (\nabla_X J) Z_k, (\nabla_Y J) Z_k \big),
\end{gather*}
where the sum is over any $Z_k$ such that $\{Z_k + i J Z_k\}$ is an orthonormal basis of $T^{0,1}$, and $\nu(\alpha ,\beta) = \langle M \alpha, \beta \rangle$ is the associated $2$-form from Lemma~\ref{lem;JRM}. It is an artifact of the previous proof that this form $\eta$ on $S^6$ does not depend on the choice of $\{Z_k\}$, and we have
\begin{gather*}
\big(P^\perp\big)^*\omega(X,Y) =\eta(X,Y) + \nu ( X , Y ).
\end{gather*}
The necessary degeneracy of this form on $S^6$ implies

\begin{cor} For any $J$ on $S^{6}$, integrable or not, there are some tangent vectors $X$, $Y$ at some point for which
\begin{gather*}
 \eta(X,Y) + \nu( X , Y ) = 0.
\end{gather*}
\end{cor}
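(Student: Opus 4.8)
The plan is to identify the form $\eta+\nu$ with the pullback $\big(P^\perp\big)^*\omega$ along the Grassmann map and then to apply the same topological obstruction already exploited in Corollary~\ref{cor;commute} and Corollary~\ref{cor;noStrongJ}. By Proposition~\ref{prop;pull} we have, as $2$-forms on $S^6$,
\begin{gather*}
\big(P^\perp\big)^*\omega(X,Y) = \eta(X,Y) + \nu(X,Y).
\end{gather*}
Since $\omega$ is the K\"ahler form on $\cG_{4,7}$ it is closed, and hence its pullback $\alpha := \big(P^\perp\big)^*\omega$ is a closed $2$-form on $S^6$.

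The argument then proceeds by contradiction, exactly as in Corollary~\ref{cor;commute}. Suppose the conclusion fails, which amounts to the closed $2$-form $\alpha$ being non-degenerate at every point of $S^6$. Then $\alpha$ is a symplectic form on the closed $6$-manifold $S^6$, so $\alpha^3$ is a nowhere-vanishing top form and $\int_{S^6}\alpha^3 \neq 0$. On the other hand, $\alpha$ is closed and $H^2\big(S^6\big)=0$, so $\alpha = d\beta$ is exact; using $d\alpha = 0$ one has $\alpha^3 = d\big(\beta \wedge \alpha^2\big)$, whence $\int_{S^6}\alpha^3 = 0$ by Stokes' theorem, a contradiction.

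Therefore $\alpha$ must be degenerate at some point $x\in S^6$: its kernel contains a nonzero vector $X\in T_xS^6$, so that $\eta(X,Y)+\nu(X,Y)=\alpha(X,Y)=0$ for every $Y\in T_xS^6$, which yields the asserted pair (indeed a whole line) of tangent vectors. I do not expect any genuine obstacle here; the only points requiring care are verifying that the pullback is closed (immediate once $\omega$ is recognized as K\"ahler on $\cG_{4,7}$) and recalling the standard fact that a closed even-dimensional manifold admitting a closed non-degenerate $2$-form has nonzero second real cohomology. The structure of the proof is entirely parallel to that of the earlier corollaries, the only new input being the explicit description of the pullback form as $\eta+\nu$ from Proposition~\ref{prop;pull}.
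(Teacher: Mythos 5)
Your proposal is correct and follows essentially the same route as the paper: the paper likewise identifies $\eta+\nu$ with $\big(P^\perp\big)^*\omega$ via Proposition~\ref{prop;pull} and invokes the ``necessary degeneracy'' of a closed $2$-form on $S^6$ coming from $H^2\big(S^6\big)=0$, exactly the obstruction used in Corollaries~\ref{cor;commute} and~\ref{cor;noStrongJ}. You merely spell out the standard Stokes'-theorem argument ($\alpha^3 = d\big(\beta\wedge\alpha^2\big)$) that the paper leaves implicit, which is a fine level of detail.
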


Let $( - , - )$ be the induced positive definite inner product from Lemma~\ref{lem;JRM} defined by $ ( X,Y) = \nu(X,JY) = \langle MX, JY \rangle$, with associated norm $\| \cdot \|$.

\begin{cor} \label{cor;bound}If $J$ on $S^6$ satisfies the first order differential inequality
\begin{gather} \label{eq;bound}
\sum_k \big( (\nabla_X J) Z_k, J m(Z_k,X) \big) \neq \sum_k \| (\nabla_X J) Z_k \|^2 + \|X\|^2
\end{gather}
for all non-zero $X$, then $J$ is not integrable.

In particular, if the tangent algebra $m$ satisfies
\begin{gather} \label{eq;halfbound}
\big( ( \nabla_X J ) Z, J m(Z,X) \big) \leq \| ( \nabla_X J ) Z \|^2 ,
\end{gather}
for all $X$ and $Z$, or the weaker condition
\begin{gather} \label{eq;diskbound}
\| m(Z,X) \| \leq \| (\nabla_X J) Z \|,
\end{gather}
for all $X$ and $Z$, then $J$ is not integrable.
\end{cor}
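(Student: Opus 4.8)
The plan is to combine the explicit pullback formula of Proposition~\ref{prop;pull} with the topological obstruction $H^2\big(S^6\big)=0$, arguing by contraposition. Since $\big(P^\perp\big)^*\omega=\eta+\nu$ is the pullback of the K\"ahler form on $\cG_{4,7}$ it is closed, and as $H^2\big(S^6\big)=0$ it is exact; consequently its third exterior power integrates to zero over $S^6$, so it cannot be non-degenerate everywhere (a non-degenerate closed $2$-form would have a nowhere-vanishing cube with nonzero integral). I would therefore assume that $J$ is integrable and that \eqref{eq;bound} holds for every nonzero $X$, and derive a contradiction by exhibiting $\big(P^\perp\big)^*\omega$ as a tamed, hence non-degenerate, form.

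The heart of the argument is to evaluate $\big(P^\perp\big)^*\omega(X,JX)$ and recognize \eqref{eq;bound} inside it. Writing $(\alpha,\beta)=\nu(\alpha,J\beta)$ for the inner product of Lemma~\ref{lem;JRM}, one has $\nu(\alpha,\gamma)=-(\alpha,J\gamma)$, so in particular $\nu(\alpha,J\alpha)=\|\alpha\|^2$ and $\nu(X,JX)=\|X\|^2$. Substituting $(\nabla_{JX}J)Z_k=m(X,Z_k)+J(\nabla_X J)Z_k$ into $\eta(X,JX)=\sum_k\nu\big((\nabla_X J)Z_k,(\nabla_{JX}J)Z_k\big)$ and adding $\nu(X,JX)$ should yield
\begin{gather*}
\big(P^\perp\big)^*\omega(X,JX)=\sum_k\|(\nabla_X J)Z_k\|^2+\|X\|^2-\sum_k\big((\nabla_X J)Z_k,Jm(X,Z_k)\big).
\end{gather*}
Under integrability $N\equiv0$, and by \eqref{eq:Nnabla} (equivalently Corollary~\ref{cor:Jintcomm}) the tangent algebra is symmetric, $m(X,Z_k)=m(Z_k,X)$, so the last sum becomes $\sum_k\big((\nabla_X J)Z_k,Jm(Z_k,X)\big)$. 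Then \eqref{eq;bound} is precisely the assertion that $\big(P^\perp\big)^*\omega(X,JX)\neq0$ for all nonzero $X$; since this quantity is continuous and the unit directions at each point and $S^6$ itself are connected, its sign is constant, so $\big(P^\perp\big)^*\omega$ is tamed by $J$ or by $-J$ and hence non-degenerate, the desired contradiction.

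For the two sufficient conditions I would establish the chain \eqref{eq;diskbound}$\Rightarrow$\eqref{eq;halfbound}$\Rightarrow$\eqref{eq;bound}, which needs no integrability. Summing \eqref{eq;halfbound} over an orthonormal basis $\{Z_k+iJZ_k\}$ of $T^{0,1}$ gives $\sum_k\big((\nabla_X J)Z_k,Jm(Z_k,X)\big)\le\sum_k\|(\nabla_X J)Z_k\|^2$, which is strictly smaller than the right-hand side of \eqref{eq;bound} because $\|X\|^2>0$ for $X\neq0$; this produces \eqref{eq;bound}. For the first implication, the Cauchy--Schwarz inequality for $(\,,\,)$ together with the $J$-orthogonality in part~(c) of Lemma~\ref{lem;JRM}, giving $\|Jm(Z,X)\|=\|m(Z,X)\|$, yields $\big((\nabla_X J)Z,Jm(Z,X)\big)\le\|(\nabla_X J)Z\|\,\|m(Z,X)\|\le\|(\nabla_X J)Z\|^2$ whenever \eqref{eq;diskbound} holds.

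I expect the main obstacle to be the sign-careful computation in the second paragraph: converting $\nu$ into the positive definite inner product and invoking the symmetry of $m$ under integrability so that the expression lines up exactly with \eqref{eq;bound}. A secondary point is the connectedness argument needed to upgrade the pointwise nonvanishing of $\big(P^\perp\big)^*\omega(X,JX)$ to genuine non-degeneracy of the closed form, which is what actually clashes with $H^2\big(S^6\big)=0$.
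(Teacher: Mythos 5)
Your proposal is correct and follows essentially the same route as the paper: substitute the tangent-algebra identity into $\eta(X,JX)$ from Proposition~\ref{prop;pull}, use integrability (symmetry of $m$) to match \eqref{eq;bound}, conclude non-degeneracy of the closed form $\big(P^\perp\big)^*\omega$ contradicting $H^2\big(S^6\big)=0$, and deduce \eqref{eq;halfbound} and \eqref{eq;diskbound} via summation and Cauchy--Schwarz. The only cosmetic differences are that the paper applies the integrability identity to $(\nabla_{JX}J)Z_k$ directly rather than invoking symmetry of $m$ afterwards, and your connectedness/taming step is unnecessary, since $\big(P^\perp\big)^*\omega(X,JX)\neq 0$ for all nonzero $X$ already gives pointwise non-degeneracy.
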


There are a plethora of local almost complex structures in dimension $6$ for which equation~\eqref{eq;bound} does hold, since it is an open condition that holds for every local integrable orthogonal almost complex structure. Note that geometrically, condition equation~\eqref{eq;halfbound} is an open half-space restriction on~$m$ in terms of~$\nabla J$, whereas equation~\eqref{eq;diskbound} is an open sub-disk restriction.

\begin{proof}Recall from equation~\eqref{eq:Nnabla}, $J$ is integrable if and only if
\begin{gather*}
 (\nabla_Y J) Z = -J (\nabla_{JY} J ) Z + J (\nabla_{JZ} J - J \nabla_Z J)Y = -J ( \nabla_{JY} J) Z + Jm(Z,Y)
\end{gather*}
for all vectors $Y$ and $Z$. This condition gives
\begin{gather*}
\eta(X,Y)=\sum_k \nu \big((\nabla_X J) Z_k, -J ( \nabla_{JY} J ) Z_k\big)+\sum_k \nu \big( (\nabla_X J) Z_k, J m(Z_k,Y) \big).
\end{gather*}
Now letting $Y=JX$ we have
\begin{gather*}
\eta(X,JX) = \sum_k \| (\nabla_X J) Z_k \|^2- \sum_k \big( (\nabla_X J) Z_k, Jm(Z_k,X) \big),
\end{gather*}
where we have used $m(Z_k,JX) = -Jm(Z_k,X)$. Since
\begin{gather*}
\big(P^\perp\big)^*\omega(X,JX) =\eta(X,JX) + \nu( X , JX ) = \eta(X,JX) + \|X\|^2,
\end{gather*}
we conclude that if
\begin{gather*}
 \sum_k \| (\nabla_X J) Z_k \|^2-\sum_k \big( (\nabla_X J) Z_k, Jm(Z_k,X) \big) + \|X\|^2 \neq 0
\end{gather*}
for all $X$, then $J$ is not integrable. The final claim follows from the Cauchy--Schwarz inequa\-lity.
\end{proof}

The remainder of this section is meant to explain the meaning and applicability of the previous corollary. First, we have already seen an example that distinguishes between the conditions in Corollary~\ref{cor;bound}.

\begin{ex} The above Example \ref{ex;2dex} in dimension $2$ provides a $J$ which is not strongly integrable with respect to the standard metric, the bound in equation~\eqref{eq;diskbound} does not hold, but the bound in equation~\eqref{eq;halfbound} holds for $0 < x < 1$, and therefore equation~\eqref{eq;bound} holds for such $x$ as well. Therefore, the bound in equation~\eqref{eq;diskbound} is a priori weaker than the bound in equation~\eqref{eq;bound}, which is a priori weaker than being strongly integrable.
\end{ex}

 To gain some further perspective on the meaning of equation~\eqref{eq;bound} in Corollary \ref{cor;bound}, it is instructive to consider the situation for a general $J$ on $S^2$, with the standard metric, for which we have the following surprisingly pleasant geometric result.

\begin{prop} \label{prop;S2calc} Let $J$ be an arbitrary almost complex structure on $S^2$. The pullback form $\big(P^\perp\big)^*\omega \in \Om^2(S^2)$ is non-degenerate at $p \in S^2$ if and only if for some vector $X_p$ at $p$,
\begin{gather*}
\det(dF_p) \neq \|X_p + i JX_p\|^2,
\end{gather*}
where $dF_p$ is the Jacobian of the smooth mapping $F_p: T_pS^2 \to T_p S^2$ defined by $F_p(w) = \tilde J_w(X_p)$, and the almost complex structure $\tilde J$ on the manifold $T_pS^2$ is given by the pullback of $J$ on $S^2$ by stereographic projection.
\end{prop}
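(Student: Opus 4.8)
The plan is to specialize Proposition~\ref{prop;pull} to the case $n=1$ and then to match its two terms, one by one, with $\det(dF_p)$ and $\|X_p+iJX_p\|^2$. Since $S^2$ is two-dimensional, a $2$-form is non-degenerate at $p$ exactly when it is nonzero there, and a nonzero alternating $2$-form is nonzero on every basis; as $\{X,JX\}$ is a basis of $T_pS^2$ for any $X\neq0$, non-degeneracy of $(P^\perp)^*\omega$ at $p$ is equivalent to $(P^\perp)^*\omega(X,JX)\neq0$. On $S^2$ the space $T^{0,1}$ is a single complex line, so \eqref{eq;pullomega} has one summand: choosing $Z$ so that $Z+iJZ$ is a unit vector in $T^{0,1}$,
\begin{gather*}
(P^\perp)^*\omega(X,JX)=\nu\big((\nabla_X J)Z,(\nabla_{JX}J)Z\big)+\nu(X,JX).
\end{gather*}

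Next I would realize $p$ as the origin of the stereographic chart obtained by projecting from its antipode. There the round metric is conformally flat with conformal factor critical at the origin, so all Christoffel symbols vanish at $p$, and hence $\nabla_v J|_p$ agrees with the ordinary coordinate derivative $\partial_v\tilde J$ of the matrix of $\tilde J$. Consequently the linearization of $F_p(w)=\tilde J_w(X_p)$ at the origin is the endomorphism $dF_p(v)=(\nabla_v J)X_p$ of $T_pS^2$.

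The heart of the matter is to rewrite $\det(dF_p)$ through the first summand above. Writing $X_p=\alpha Z+\beta JZ$ and using that $\nabla_v J$ is $J$-anti-linear, $(\nabla_v J)\circ J=-J\circ(\nabla_v J)$, one obtains $(\nabla_v J)X_p=(\alpha\operatorname{Id}-\beta J)(\nabla_v J)Z$ for every $v$. Taking $v=X$ and $v=JX$ in the identity $dF_p(X)\wedge dF_p(JX)=\det(dF_p)\,(X\wedge JX)$, the common left factor contributes $\det(\alpha\operatorname{Id}-\beta J)=\alpha^2+\beta^2$, while $X\wedge JX=(\alpha^2+\beta^2)\,Z\wedge JZ$ appears on the right, so that $\alpha^2+\beta^2$ cancels and
\begin{gather*}
\det(dF_p)=\frac{(\nabla_X J)Z\wedge(\nabla_{JX}J)Z}{Z\wedge JZ}.
\end{gather*}
Since $\nu=\langle M\,\cdot\,,\cdot\rangle$ is a nonzero constant multiple of the area form (Lemma~\ref{lem;JRM}), this ratio is, up to sign, $\nu((\nabla_X J)Z,(\nabla_{JX}J)Z)/(Z,Z)$, where $(Z,Z)=\nu(Z,JZ)>0$; and the normalization $\|Z+iJZ\|=1$ gives $\|X_p+iJX_p\|^2=\alpha^2+\beta^2=\nu(X,JX)/(Z,Z)$, identifying the second summand. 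Substituting both into the displayed formula for $(P^\perp)^*\omega(X,JX)$ exhibits it as the fixed positive factor $(Z,Z)$ times a definite combination of $\|X_p+iJX_p\|^2$ and $\det(dF_p)$, which vanishes precisely when $\det(dF_p)=\|X_p+iJX_p\|^2$. This is the asserted equivalence; moreover, since the two sides are proportional quadratic forms in $X_p$, ``for some $X_p$'' coincides with ``for every $X_p\neq0$''.

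The main obstacle is exactly this last identity: $\det(dF_p)$ encodes $\nabla J$ with its \emph{direction} varying and its argument $X_p$ \emph{fixed}, whereas the pullback term has $\nabla_X J$ and $\nabla_{JX}J$ acting on the \emph{fixed} vector $Z$. The $J$-anti-linearity relation $(\nabla_v J)X_p=(\alpha\operatorname{Id}-\beta J)(\nabla_v J)Z$ is precisely what interchanges these two roles, and the clean cancellation of $\det(\alpha\operatorname{Id}-\beta J)=\alpha^2+\beta^2$ against $X\wedge JX$ is what makes the result so pleasant. Carrying the metric normalizations, and in particular the orientation convention entering the Jacobian $dF_p$, correctly through to the sign and constant in the final proportionality is the delicate bookkeeping on which the statement rests.
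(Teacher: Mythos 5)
Your route is genuinely different from the paper's, which works entirely inside a stereographic chart: it computes the Christoffel symbols, the covariant derivatives \eqref{eqn;NJ1}--\eqref{eqn;NJ2}, an explicit matrix for $M$, and then assembles $\big(P^\perp\big)^*\omega(\partial_x,J\partial_x)$ term by term. You instead argue invariantly from Proposition~\ref{prop;pull}, and your intermediate identities are all correct: $dF_p(v)=(\nabla_v J)X_p$ (the conformal factor is critical at the chart's center, so the Christoffel symbols vanish there), the anti-linearity identity $(\nabla_v J)X_p=(\alpha\operatorname{Id}-\beta J)(\nabla_v J)Z$, the cancellation of $\det(\alpha\operatorname{Id}-\beta J)=\alpha^2+\beta^2$ against $X\wedge JX=(\alpha^2+\beta^2)\,Z\wedge JZ$, hence $\det(dF_p)=\nu\big((\nabla_X J)Z,(\nabla_{JX}J)Z\big)/(Z,Z)$, and $\|X_p+iJX_p\|^2=\nu(X,JX)/(Z,Z)$. (Your ``up to sign'' hedge is unnecessary: in dimension two $\nu(v,w)/\nu(Z,JZ)=(v\wedge w)/(Z\wedge JZ)$ holds exactly.) The reduction of non-degeneracy to ``$\neq 0$ for some, equivalently every, $X_p\neq 0$'' is also fine.

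The argument nevertheless fails at precisely the step you defer as ``delicate bookkeeping''. Your own identities force
\begin{gather*}
\big(P^\perp\big)^*\omega(X,JX)=(Z,Z)\big[\det(dF_p)+\|X_p+iJX_p\|^2\big],
\end{gather*}
a \emph{sum}, which vanishes precisely when $\det(dF_p)=-\|X_p+iJX_p\|^2$; your last sentence asserts vanishing at $+\|X_p+iJX_p\|^2$ with no derivation, and, granting \eqref{eq;pullomega}, that version is false. Test it on $f=\lambda y$, $g=1+\mu x$: then $(\nabla_{\partial_x}J)|_p=\left(\begin{smallmatrix}0&\mu\\ \mu&0\end{smallmatrix}\right)$ and $(\nabla_{\partial_y}J)|_p=\left(\begin{smallmatrix}\lambda&0\\ 0&-\lambda\end{smallmatrix}\right)$, so $\det(dF_p)=-\tfrac{\lambda\mu}{2}\|X_p+iJX_p\|^2$ for \emph{every} $X_p$, while \eqref{eq;pullomega} (or a direct matrix evaluation of $\tfrac{1}{2i}\Tr\big(P\,dP\,dP\big)$ at $p$) gives $\big(P^\perp\big)^*\omega(\partial_x,\partial_y)=\tfrac12-\tfrac{\lambda\mu}{4}$: degenerate exactly when $\lambda\mu=2$, i.e., when $\det(dF_p)\equiv-\|X_p+iJX_p\|^2$, and non-degenerate when $\lambda\mu=-2$, even though then $\det(dF_p)\equiv+\|X_p+iJX_p\|^2$. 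What your computation actually exposes is that the paper's own proof misquotes \eqref{eq;pullomega}, inserting a spurious minus sign in front of $\langle M(\nabla_{\partial_x}J)Z,(\nabla_{J\partial_x}J)Z\rangle$, and the printed statement inherits that sign error; one can also see structurally that the printed sign cannot be right, since for strongly integrable $J$ the map $dF_p$ commutes with $J$, so $\det(dF_p)\ge 0$ and the sum version is automatically positive, in agreement with the positivity recorded in the remark following Proposition~\ref{prop;pull}. So your method is sound and, with the sign tracked honestly, it proves the corrected statement (with $-\|X_p+iJX_p\|^2$); as a proof of the statement as printed, the final line is a genuine gap, and no bookkeeping can close it.
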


Since every almost complex structure in dimension two is integrable, and for integrable almost complex structures~\eqref{eq;bound} holds precisely when $\big(P^\perp\big)^*\omega$ is non-degenerate, this determines all almost complex structures on~$S^2$ for which~\eqref{eq;bound} holds.

\begin{proof}We perform a local calculation of the form $\big(P^\perp\big)^*\omega$ at a given point $p$ in the sphere using stereographic projection. Any $J$ on $S^2$ can be pulled back via these coordinates to $T_pS^2$, which can be written uniquely in the standard basis $\{\partial_x , \partial_y\}$ as
\begin{gather*}
\tilde J=
\begin{bmatrix}
f & \frac{-1-f^2}{g}\\
g & -f
\end{bmatrix}
\end{gather*}
for some functions $f,g\colon \R^2 \to \R$, with $g$ nowhere zero. Moreover, on any fixed disk about the origin in $T_pS^2$, every such $\tilde J$ occurs in this way.

Without loss of generality, we assume $p\in S^2$ is the south pole of the sphere, with stereographic projection defined by projection on the tangent plane at the south pole. The standard metric on the sphere in these stereographic coordinates is given by
\begin{gather*}
ds^2 = h(x,y) \big(dx^2 + dy^2\big), \qquad \textrm{where} \qquad h(x,y) = \frac{1}{\big(1+x^2 +y^2\big)^2}.
\end{gather*}

Using the expression for the Christoffel symbols in terms of the metric,
\begin{gather*}
\Gamma_{jk}^\ell = \frac 1 2 \big(ds^2\big)^{lr} \big( \partial_k \big(ds^2\big)_{rj} + \partial_j \big(ds^2\big)_{rk} - \partial_r \big(ds^2\big)_{jk} \big),
\end{gather*}
we have
\begin{gather*}
\Gamma_{xx}^x = \Gamma_{xy}^y = \Gamma_{yx}^y = - \Gamma_{yy}^x = \frac{h_x}{2h}, \qquad
\Gamma_{yy}^y = \Gamma_{xy}^x = \Gamma_{yx}^x = - \Gamma_{xx}^y = \frac{h_y}{2h}.
\end{gather*}
One then calculates
\begin{gather}
( \nabla_{\partial_x} J ) \partial_x= \left(f_x + \frac{h_y}{2h} \left( g- \frac{1+f^2}{g} \right) \right) \partial_x +
\left(g_x - \frac{f h_y}{h}\right) \partial_y, \label{eqn;NJ1} \\
( \nabla_{\partial_y} J ) \partial_x = \left(f_y - \frac{h_x}{2h} \left( g - \frac{ \big(1+f^2 \big) }{g} \right) \right) \partial_x + \left(g_y + \frac{f h_x}{h}\right) \partial_y. \label{eqn;NJ2}
\end{gather}
Note that since this is dimension two, $\big(P^\perp\big)^*\!\omega$ is non-degenerate at~$p$ if and only if $\big(P^\perp\big)^*\!\omega(X,JX)\!$ $\neq 0$ for some $X$ at~$p$. We next calculate $\big(P^\perp\big)^*\omega(X,JX)$, and without loss of generality we may assume $X= \partial_x$.

From equation~\eqref{eq;pullomega} of Proposition~\ref{prop;pull} we have
\begin{gather*}
\big(P^\perp\big)^*\omega(X,JX) = - \langle M ( \nabla_{\partial_x} J ) Z, ( \nabla_{J \partial_x} J ) Z \rangle +
 \langle M \partial_x , J\partial_x \rangle,
\end{gather*}
where $Z$ is any vector such that $Z + iJZ$ is unit vector spanning $T^{0,1}$, and $ \langle - , - \rangle$ is the standard round metric, which equals the metric $ds^2$ above at $p$. We choose
\begin{gather*}
Z = c \partial_x, \qquad \textrm{where} \qquad c = \frac{1}{\sqrt{1+f^2+g^2}}
\end{gather*}
so that $c(\partial_x + i (f \partial_x + g \partial_y ) )$ is a unit vector.

We have $P^\perp(Y) = \langle Y, Z+iJZ \rangle (Z+iJZ)$, and $M= \operatorname{Im} P^\perp$, so that
\begin{gather*}
P^\perp(\partial_x) = \big( c^2 \partial_x + c^2 f ( f \partial_x + g \partial_y ) \big) + i c^2 g \partial_y ,
\end{gather*}
and
\begin{gather*}
M(\partial_x) = c^2 g \partial_y \qquad \textrm{and} \qquad M(\partial_y) = -c^2 g \partial_x.
\end{gather*}
Next we have $ (\nabla J) Z = c (\nabla J) \partial_x $, so using $h(0,0) =1$ and $h_x(0,0) = h_y(0,0) =0$, we have from equation~\eqref{eqn;NJ1} that
\begin{gather*}
M( \nabla_{\partial_x} J) \partial_x = M ( f_x \partial_x + g_x \partial_y ) = c^2 g ( f_x \partial_y - g_x \partial_x ).
\end{gather*}
Since $\big(P^\perp\big)^*\omega$ is alternating and $J \partial_x = f \partial_x + g \partial_y$, we have from equation~\eqref{eqn;NJ2}
\begin{gather*}
\big(P^\perp\big)^*\omega(X,JX) =-c^2 g \langle M ( \nabla_{\partial_x} J) \partial_x , ( \nabla_{\partial_y} J ) \partial_x \rangle +
 c^2 g^2 \langle \partial_y , \partial_y \rangle \\
\hphantom{\big(P^\perp\big)^*\omega(X,JX)}{} =-c^4 g^2 \langle f_x \partial_y - g_x \partial_x , f_y \partial_x + g_y \partial_y \rangle + c^2 g^2 = -c^4 g^2 (f_x g_y - g_x f_y )
 + c^2 g^2.
 \end{gather*}
Since $g$ is nowhere zero, this is non-zero if and only if
\begin{gather*}
f_x g_y - g_x f_y \neq 1+f^2+g^2, \qquad \text{i.e.}, \qquad
\det(dF) \neq \|\partial_x + i J \partial_x \|^2,
\end{gather*}
where $F(x,y) = (f(x,y),g(x,y)) = \tilde J (\partial_x)$.
\end{proof}

Thus, on $S^2$, the condition that the pullback form $\big(P^\perp\big)^*\omega$ is degenerate is quite rare, though one can construct almost complex structures on $S^2$ such that equation~\eqref{eq;bound} of Corollary~\ref{cor;bound} fails to hold.

By a similar local calculation for $\R^6$, we see that the pullback form $\big(P^\perp\big)^*\omega$ of an arbitrary~$J$ on~$S^6$ can be degenerate at a point, even if~$J$ is integrable near that point. In fact, one can consider an integrable $J$ in a neighborhood of the origin in~$\R^6$ which splits into the direct sum of an almost complex structure $K$ on $\R^2$ of the form the proof of Proposition~\ref{prop;S2calc}, together with two standard almost complex structures on~$\R^2$, making this $J$ orthogonal at the origin of $\R^6$ (only). Then the same calculation shows the transport of this local~$J$ to~$S^6$ (extended smoothly to the whole sphere) satisfies that $\big(P^\perp\big)^*\omega \in \Om^2\big(S^6\big)$ is degenerate if~$K$ is degenerate.

We next show that the bound in~\eqref{eq;halfbound} in Corollary \ref{cor;bound} allows us to conclude considerably more than one would a priori know for an arbitrary manifold. To see this, we first consider the left hand side of the inequality in~\eqref{eq;halfbound} as a quadratic form on each tangent space.

\begin{defn}Let $(M,J, \langle - , - \rangle)$ be an almost complex Riemmannian manifold, with associated Levi-Civita connection~$\nabla$ and tangent algebra $m$. Let $( - , - )$ be the induced metric from Lemma~\ref{lem;JRM}.

For any $Z \in T_x M$ define
\begin{gather*}
Q_Z(X) = \big( (\nabla_X J) Z, J m(Z,X) \big) .
 \end{gather*}
 Note that $Q_{JZ}(X) = Q_Z(X) $ for all $Z$ and $X$.
 \end{defn}

Note that this quadratic form vanishes identically whenever $J$ is strongly integrable with respect to $\nabla$.
Next we show that the condition that $Q_Z(X)$ is negative definite, even at any one point, already guarantees $J$ to be non-integrable on any manifold.

 \begin{prop} Let $(M,J, \langle - , - \rangle)$ be as above, and let $Q_Z$ be the associated quadratic form
 for any fixed $Z$. If there are any two non-zero tangent vectors $X$ and $Z$ at any point of $M$ for which $Q_Z(X) < 0$ and $Q_Z(JX) < 0$, then $J$ is not integrable.
 \end{prop}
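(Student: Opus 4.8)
The plan is to argue by contradiction: I would assume $J$ is integrable and show that $Q_Z(X)$ and $Q_Z(JX)$ cannot both be negative. The argument is purely local and algebraic at the chosen point — unlike the earlier corollaries, no topology is involved here — with integrability entering only through the identity used in the proof of Corollary~\ref{cor;bound}, namely
\begin{gather*}
(\nabla_Y J) Z = -J (\nabla_{JY} J) Z + J m(Z,Y)
\end{gather*}
for all $Y$, $Z$, which is just equation~\eqref{eq:Nnabla} with $N \equiv 0$.

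First I would use this identity (with $Y = X$) to write $J m(Z,X) = (\nabla_X J) Z + J(\nabla_{JX} J) Z$ and substitute into the definition of $Q_Z(X)$, obtaining
\begin{gather*}
Q_Z(X) = \| (\nabla_X J) Z \|^2 + \big( (\nabla_X J) Z, J(\nabla_{JX} J) Z\big),
\end{gather*}
where $\|\cdot\|$ and $(-,-)$ are the norm and inner product induced by $\nu$ via Lemma~\ref{lem;JRM}. For the second quantity I would first replace $J m(Z,JX)$ by $m(Z,X)$, using $m(Z,JX) = -Jm(Z,X)$ from equation~\eqref{eq:misJantilinear}, and then rewrite $m(Z,X) = -J(\nabla_X J)Z + (\nabla_{JX} J) Z$ from the same identity, so that
\begin{gather*}
Q_Z(JX) = \| (\nabla_{JX} J) Z \|^2 - \big( (\nabla_{JX} J) Z, J(\nabla_X J) Z\big).
\end{gather*}

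The heart of the matter is that the two cross terms above are equal. Since $J$ is orthogonal for the induced inner product (Lemma~\ref{lem;JRM}$(c)$) it is skew, so $(\alpha, J\beta) = -(J\alpha,\beta)$; together with symmetry of $(-,-)$ this gives $-\big( (\nabla_{JX} J) Z, J(\nabla_X J) Z\big) = \big( (\nabla_X J) Z, J(\nabla_{JX} J) Z\big) =: c$. Setting $a = \| (\nabla_X J) Z \|$ and $b = \| (\nabla_{JX} J) Z \|$ we then have $Q_Z(X) = a^2 + c$ and $Q_Z(JX) = b^2 + c$, whence
\begin{gather*}
Q_Z(X) + Q_Z(JX) = a^2 + b^2 + 2c \geq (a-b)^2 \geq 0,
\end{gather*}
the middle step following from Cauchy--Schwarz and $\|Jv\| = \|v\|$, which force $|c| \leq ab$. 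This contradicts the hypothesis $Q_Z(X) < 0$ and $Q_Z(JX) < 0$, so $J$ is not integrable.

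I expect the main obstacle to be sign bookkeeping: correctly tracking the factors of $-1$ coming from $J^2 = -\operatorname{Id}$ and from the $J$-anti-linearity \eqref{eq:misJantilinear} of $m$, and in particular verifying that the two cross terms genuinely coincide rather than cancel. Once that identification is secured, the inequality $a^2 + b^2 + 2c \geq 0$ is immediate, and the orthogonality of $J$ recorded in Lemma~\ref{lem;JRM} is precisely what supplies both $\|Jv\| = \|v\|$ and the skew-symmetry used in identifying the cross terms.
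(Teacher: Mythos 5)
Your proof is correct and follows essentially the same route as the paper: both arguments show that integrability forces $Q_Z(X)+Q_Z(JX)\geq 0$, using the integrability identity for $m$ together with the orthogonality (hence skewness) of $J$ for the inner product $(\,\cdot\,,\,\cdot\,)$ from Lemma~\ref{lem;JRM}. The only cosmetic difference is the final step: your quantity $a^2+b^2+2c$ is exactly $\|m(X,Z)\|^2$, since $m(X,Z)=(\nabla_{JX}J)Z-J(\nabla_X J)Z$, so where you invoke Cauchy--Schwarz to get $a^2+b^2+2c\geq (a-b)^2\geq 0$, the paper simply recognizes the sum as the perfect square $\big(m(X,Z),m(Z,X)\big)=\|m(X,Z)\|^2$ via commutativity of the tangent algebra.
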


\begin{proof} We show that if $J$ is integrable then, the form
 \begin{gather*}
 K(X) = Q_Z(X)+ Q_Z(JX)
 \end{gather*}
 is positive semi-definite, so that for each $X$,
 \begin{gather*}
 Q_Z(X) \geq 0 \qquad \textrm{or} \qquad Q_Z(JX) \geq 0.
 \end{gather*}

Since $m$ is $J$-anti-linear and $J$ is orthogonal with respect to the metric $( - , - )$ from Lemma~\ref{lem;JRM}, we have
\begin{gather*}
Q_Z(JX) + Q_Z(X) = \big( (\nabla_{JX} J) Z, J m(Z,JX) \big) + \big( (\nabla_X J) Z, J m(Z,X) \big) \\
\hphantom{Q_Z(JX) + Q_Z(X)}{} = \big( (\nabla_{JX} J) Z, m(Z,X) \big) - \big( J (\nabla_X J) Z, m(Z,X) \big)\\
\hphantom{Q_Z(JX) + Q_Z(X)}{}= \big( m(X,Z) , m(Z,X) \big).
\end{gather*}
Recall, $J$ is integrable if and only if $m(X,Z) = m(Z,X)$ for all $X$ and $Z$, so in this case
\begin{gather*}
Q_Z(JX) + Q_Z(X) = \| m(X,Z) \|^2 \geq 0. \tag*{\qed}
\end{gather*} \renewcommand{\qed}{}
\end{proof}

Thus, what is gained from Corollary \ref{cor;bound} when~\eqref{eq;halfbound} holds on $S^6$, beyond what is a priori true for all manifolds, is the case that the left hand side of~\eqref{eq;halfbound} is nowhere negative definite.

 Finally, we mention that in \cite{BHL} it was shown that there is an open set, containing all orthogonal almost complex structures, none of which are integrable. It would be interesting to see how this open set compares to the set of almost complex structures which are forbidden to be integrable by Corollary~\ref{cor;bound}.

\subsection*{Acknowledgements}
 I gratefully acknowledge the Queens College sabbatical/fellowship leave program, which provided me with time to conduct some of this research. I~thank Arthur Parzygnat for comments on a~preliminary version of this paper, and also thank the referees for their suggestions, which have improved this paper.

\pdfbookmark[1]{References}{ref}
\LastPageEnding

\end{document}